\theoremstyle{plain}
\newtheorem{theorem}{Theorem}[section]
\newtheorem{definition}[theorem]{Definition}
\newtheorem{lemma}[theorem]{Lemma}
\newtheorem{proposition}[theorem]{Proposition}
\newtheorem{remark}[theorem]{Remark}
\def\section{\@startsection{section}{1}%
  \z@{1.5\linespacing\@plus\linespacing}{.5\linespacing}%
  {\normalfont\bfseries\large\centering}}
\def\CC{{\mathbb C}}
\def\RR{{\mathbb R}}
\def\NN{{\mathbb N}}
\def\ZZ{{\mathbb Z}}
\def\({\left(}
\def\){\right)}
\def\<{\left\langle}
\def\>{\right\rangle}
\numberwithin{equation}{section}
\newcommand{\be}{\begin{equation}}
\newcommand{\ee}{\end{equation}}
\newcommand{\bea}{\begin{eqnarray}}
\newcommand{\eea}{\end{eqnarray}}
\newcommand{\bee}{\begin{eqnarray*}}
\newcommand{\eee}{\end{eqnarray*}}
\newcommand{\SC}{\mathcal S}
\def\ds{\displaystyle}
\def\bs{\bigskip}
\def\fref#1{{\rm (\ref{#1})}}
\def\pref#1{{\rm \ref{#1}}}
\def\pa{\partial}
\def\psij{\Psi_j}
\def\phij{\Phi_j}
\def\beq{\begin{equation}}
\def\eeq{\end{equation}}
\def\beqnar{\begin{eqnarray}}
\def\eeqnar{\begin{eqnarray}}
\def\beqnarnn{\begin{eqnarray*}}
\def\eeqnarnn{\end{eqnarray*}}
\def\beqnn{\begin{equation*}}
\def\eeqnn{\end{equation*}}
\def\supess{\mathop{\operator@font sup\hspace{.5mm}ess}}
\title[Quantum transport of electrons in graphene layers]{Analysis of models for quantum transport of electrons in graphene layers}
\author[R. El Hajj]{Raymond El Hajj}
\email{raymond.el-hajj@insa-rennes.fr}
\address{IRMAR, INSA de Rennes}
\author[F. M\'ehats]{Florian M\'ehats}
\email{florian.mehats@univ-rennes1.fr}
\address{IRMAR, Universit\'e de Rennes 1 and INRIA, IPSO Project}
\date{}
\begin{document}

\begin{abstract}\noindent
We present and analyze two mathematical models for the self consistent quantum transport of electrons 
in a graphene layer. We treat two situations. First, when the particles can move in all the plane $\RR^2$, the model takes the form of a system of massless Dirac equations coupled together by a selfconsistent potential, which is the trace in the plane of the graphene of the 3D Poisson potential associated to surface densities.
In this case, we prove local in time existence and uniqueness of a solution in $H^s(\RR^2)$, for $s > 3/8$ which includes in particular the energy space $H^{1/2}(\RR^2)$. 
The main tools that enable to reach $s\in (3/8,1/2)$ are the dispersive Strichartz estimates that we generalized here for mixed quantum states. Second, we consider a situation where the particles are constrained in a regular bounded domain $\Omega$. In order to take into account Dirichlet boundary conditions which are not compatible with the Dirac Hamiltonian $H_{0}$, we propose a different model built on a modified Hamiltonian displaying the same energy band diagram as $H_{0}$ near the Dirac points. The well-posedness of the system in this case is proved in $H^s_{A}$, the domain of the 
fractional order Dirichlet Laplacian operator, for $1/2\leq s<5/2$.
\end{abstract}

\maketitle

\bs
\noindent \textbf{Key words.} Quantum transport, Strichartz estimates, mixed quantum states, Dirac-Hartree system, graphene, Hardy inequality.

\section{Introduction and main results}	
\subsection{Setting of the model}
The graphene is a two dimensional crystal of carbon. It appears naturally in the graphite crystals, which consist of many graphene sheets stacked together, and was isolated experimentally for the first time in 2004 by A. Geim's team, see \cite{geim}. Actually, discovery of graphene has opened new ways to study some basic quantum relativistic phenomena which have always been considered as very exotic. Indeed, this new and strictly two-dimensional material displays unusual electronic properties arising from the biconically shaped form of the Fermi surfaces near the Brillouin zone corners \cite{castro}.
In a quite wide range of energy, electrons and holes propagate as relativistic massless Fermions with an effective velocity $v_F\simeq \frac{c}{300}$, with a linear dispersion relation $E = \pm\hbar v_F |k|$, and their behavior reproduces the physics of quantum electrodynamics but at the much smaller energy scale of the solid state physics. 

Graphene is thus a zero band-gap semiconductor with a linear, rather than quadratic, long-wavelength energy dispersion for the electron-hole pairs, which cannot be decoupled as usually in semiconductors.

 In this paper, we are interested in the mathematical analysis of two quantum models for the transport of an electron gas in a graphene layer, including many-body effects via a mean-field description. We will indeed consider two situations: the case where the particles can move under the action of the potential {\em in all the plane} $\RR^2$, and the case where they are constrained {\em in a bounded domain} $\Omega$, roughly modeling an electronic device. In this case, we will have to prescribe additionally some boundary conditions, leading to some specific difficulties.

In both models, the electron ensemble is a mixed quantum state, described by a density matrix
$$\sum_{j\in \NN}\lambda_j \left|\Psi_j\right\rangle\left\langle\Psi_j\right|$$
which corresponds to a statistical mixture of pure states $\Psi_j$. The coefficients $\lambda_j>0$ are the occupation probabilities and are fixed during the evolution of the system (we only consider here closed quantum systems, without interaction with an environment). In this description, the valence and conduction bands are denoted as pseudo-spin components of the particle described in term of a vector wave function 
$$\Psi_j=\left(\begin{array}{c}\psi^+_j \\\psi^-_j\end{array}\right)\in \CC^2.$$ 

\subsection*{The whole space case} Let us first introduce the model in the case $\Omega=\RR^2$. The time evolution of each vector wave function $\Psi_j$ is given by the following two-dimensional massless Dirac equation:
\begin{equation}
i\partial_t\psij=- i\sigma\cdot\nabla_x\psij+V\psij,\qquad \psij(0,x)=\phij(x). \label{dirac}
\end{equation}
Here, $\sigma=(\sigma_1,\sigma_2)$ denotes a vector of the two Pauli matrices given by
 \beq\label{paulimatrices}
 \sigma_1=\left(
                                    \begin{array}{cc}
                                      0 & 1 \\
                                      1 & 0 \\
                                    \end{array}
                                  \right),\qquad \sigma_2=\left(\begin{array}{cc}
                                      0 & -i \\
                                      i & 0 \\
                                    \end{array}
                                  \right),
\eeq                               
and $\sigma\cdot\nabla_x=\sigma_1\partial_{x_1}+\sigma_2\partial_{x_2}$. 
We assume that no external potential is applied to the system and that $V=V(t,x)$ is the self-consistent electrostatic potential. 
Since the particles are confined in the graphene plane, the self-consistent potential satisfies the following "confined Poisson equation", which is the trace on the graphene plane of the 3D Poisson equation:
\begin{equation}
\label{poisson}
V(t,x)=\frac{1}{4\pi}\int_{\RR^2}\frac{n(t,y)}{|x-y|}dy,
\end{equation}
where 
$$n(t,x)=\sum_{j\in \NN}\lambda_j|\psij(t,x)|^2$$
 is the bidimensional surface density of particles. Here $|\psij|^2=|\psi^+_j |^2+|\psi^-_j|^2$ and  $\lambda=(\lambda_j)_{j\in \NN}$ is a fixed $\ell^1$ sequence of positive real numbers.  A derivation of this equation \fref{poisson} from the $3D$ Poisson equation for a bidimensionally confined electron gas is sketched in Appendix \ref{appendix}. We notice that \eqref{poisson} is equivalent to the following fractional Laplacian equation in $\RR^2$:
$$
2(-\Delta)^{1/2} V=n.
$$

\sloppy
This Dirac-Hartree system \fref{dirac}, \fref{poisson} is the graphene counterpart of the Schr\"{o}dinger-Poisson system, which is one of the most used models for studying the quantum transport in semiconductor devices \cite{MRS,CAZ,IZL,cetraro}. For mixed quantum states, the $H^2$ and $H^1$ theories of the Schr\"odinger-Poisson system were developed in Ref. \cite{IZL,ARN,BRMA}. F. Castella \cite{Cas} studied the $L^2$ solutions using a generalization, for mixed states, of Strichartz estimates. 

\bs
One of the main differences here is that the Dirac-Hartree system \fref{dirac}, \fref{poisson} does not have a positive energy like Schr\"odinger-Poisson. Indeed, let $H_0=-i\sigma\cdot\nabla_x$. The spectrum of $H_0$ is continuous and given by 
\begin{equation}\label{spectreH0}
\sigma(H_0)=\RR=\{ \pm |\xi|,\quad \xi\in \RR^2  \}.
\end{equation}
 For any $\xi\in \RR^2$, $\xi\neq 0$, let $\Pi_+$ and $\Pi_-$ the spectral projection operators associated to $+|\xi|$ and $-|\xi|$ respectively,
 \begin{equation}\label{projecteur}
 \Pi_{\pm}=\frac12\left({\rm Id}\pm\frac{H_0}{|\xi|}\right).
 \end{equation}
Multiplying \eqref{dirac} by $\lambda_{j}\partial_t\overline\Psi_j$, integrating with respect to $x$, summing on $j$ and taking the real part, one obtains the following energy conservation 
\beq\label{energyconserv}
\frac{d}{dt} E(t)=0,
\eeq   
with
\begin{eqnarray}\label{energy}\nonumber
E(t)&=&\sum_j\lambda_j\int_{\RR^2}\langle -i\sigma \cdot \nabla_x \Psi_j,\Psi_j\rangle dx +\int_{\RR^2}|(-\Delta)^{1/4}V|^2dx,\\
&=&\sum_j\lambda_j\int_{\RR^2}|\xi| \left(|\Pi_+\Psi_j|^2 -|\Pi_-\Psi_j|^2\right)d\xi+\int_{\RR^2}|(-\Delta)^{1/4}V|^2 dx
\end{eqnarray}
where $\langle\cdot,\cdot\rangle$ denotes the Hermitian scalar product in $\CC^2$. Hence, $E(t)$ is not positive and one expects only local in time existing results. 

\subsection*{Case of a bounded domain} Let us now describe our model in the case where the particles are supposed to be constrained to move in a smooth bounded domain $\Omega\subset \RR^2$, with boundary $\pa\Omega$. The question of boundary conditions for the Dirac equation is a delicate issue that has been discussed in the physical literature  \cite{castro, wurm}: several choices of boundary conditions have been proposed, corresponding to different modeling of the boundary. We emphasize that the simple Dirichlet boundary conditions --\,usually modeling infinite walls in semiconductors\,-- are not compatible with the Dirac Hamiltonian $H_0=-i\sigma\cdot\nabla_x$ which is an operator of order 1. Here, we shall adopt another strategy, following for instance \cite{akola,harrell-yolcu} where it has been proposed to modify in an ad-hoc way the relativistic Hamiltonians in order to take into account Dirichlet boundary conditions. The idea, also discussed in \cite{robinson,fuchs}, is to choose a square root of the massless Klein-Gordon operator different from the Dirac operator $H_0$. 

Throughout this paper, we shall denote by $A=-\Delta_{\rm Dir}$ the opposite of the Dirichlet Laplacian on $L^2(\Omega)$, of domain $H^2(\Omega)\cap H^1_0(\Omega)$. We introduce the Hamiltonian
\begin{equation}\label{Htilde0}
\widetilde H_0=  \left(\begin{array}{cc}
                                      0 & A^{1/2} \\
                                      A^{1/2} & 0 \\
                                    \end{array}
                                  \right)=\sigma_1 A^{1/2},
\end{equation}
where $\sigma_1$ is the first Pauli matrix defined by \eqref{paulimatrices} and $A^{1/2}$ is the square root of
the opposite of the Dirichlet Laplacian, spectrally defined, see Subsection \ref{sectionmain} for a precise definition. We point out that, in the whole space case, $\widetilde H_0$ displays the same energy band diagram as $H_0$, which approximates near the so-called Dirac points the graphene band diagram computed in \cite{wallace}. Moreover, as $H_0$ does, the non diagonal structure of $\widetilde H_0$ induces couplings between electrons and holes.

We are now ready to write our model in the case of a bounded domain $\Omega$, that will be built on this modified Hamiltonian $\widetilde H_0$. For all $j\in \NN$, the 2-spinor wave function $\Psi_j(t,x)$ defined on $\RR\times \Omega$ solves the following equation:
\begin{equation}
i\partial_t\psij=\widetilde H_0\psij+V\psij,\qquad \psij(0,x)=\phij(x), \label{dirac2}
\end{equation}
where the self-consistent potential $V$ is the solution of the confined Poisson equation with Dirichlet boundary conditions, formally derived in Appendix \ref{appendix}:
\beq\label{poisson2}
V=\frac{1}{2}A^{-1/2}n=\frac{1}{2}(-\Delta_{\rm Dir})^{-1/2}n
\eeq  
with
$$n(t,x)=\sum_{j\in \NN}\lambda_j|\psij(t,x)|^2.$$

\bs
The outline of this article is as follows. In the next subsection of this introduction, we present our two main results. First, in the whole-space case, we prove the local in time existence and uniqueness of a solution to \fref{dirac}, \fref{poisson} in $H^s$, for $ s>\frac{3}{8}$. This result is stated in Theorem \ref{main-theorem2} and proved in Section \ref{wholedomain}. The main tools that enable to reach $s\in (\frac{3}{8},\frac{1}{2})$ are the dispersive Strichartz estimates, that we generalize here for mixed quantum states (in Subsection \ref{wholedomain2}), analogously to the work of F. Castella \cite{Cas} for the Schr\"odinger-Poisson system. Second, in the case of a bounded domain, we prove the local in time existence and uniqueness of a solution to \fref{dirac2}, \fref{poisson2} in $H^s$, for $\frac12\leq s<\frac{5}{2}$. In particular, this shows that the problem is well-posed in the energy space $H^{1/2}$. This result is stated in Theorem \ref{main-theorem1} and proved in Section \ref{boundeddomain}. 
Finally, in Appendix \ref{appendix}, we derive formally \fref{poisson} as the trace of the 3D Poisson equation on the graphene plane and, in Appendix \ref{appB}, we recall a few useful properties of fractional order Sobolev spaces in bounded domains.

To end this introduction, we refer the reader to \cite{ESVG,ESSE,MOR,MATSU1,MATSU2} and references therein for the study of semilinear and non linear Dirac type equations. In particular, the existence of ground states for a Hartree-Fock graphene model in the whole space case is proved in \cite{hainzl} and a Dirac-Gross-Pitaevskii model is derived in \cite{fefferman}.

\subsection{Main results}
\label{sectionmain}

\subsection*{The whole space case} Let us present our main result for \eqref{dirac}, \eqref{poisson}.
As in \cite{Cas}, we need to introduce vector-valued spaces adapted to mixed quantum states. 
\begin{definition}\label{SpacesDef2}
For all $s\in \RR$, we denote
$$H^s(\lambda)=\{\Phi=(\Phi_j)_{j\in \NN}\in (H^s(\RR^2,\CC^2))^\NN\,:\,\|\Phi\|_{H^s(\lambda)}^2=\sum_j\lambda_j \|\Phi_j\|_{H^s}^2< +\infty\},$$
where $H^s(\RR^2,\CC^2)$ is the usual Sobolev space of exponent $s$ for spinor wave functions.
\end{definition}
The first main result of this article is the following.
\begin{theorem}\label{main-theorem2}
(i) Let $s\geq\frac{1}{2}$. Then there exists $T >0$ such that, for all $\Phi\in H^s(\lambda)$, the Cauchy problem \fref{dirac}, \fref{poisson}  admits a unique solution $\Psi\in C^0([-T,T],H^s(\lambda))$.\\[1mm]
(ii) Let $ \frac{3}{8}<s<\frac{1}{2}$. Then there exists $T >0$ and a subspace $X_T$ of $C^0([-T,T],H^s(\lambda))$ (see Definition \eqref{defXT}) such that, for all $\Phi\in H^s(\lambda)$, the Cauchy problem \fref{dirac}, \fref{poisson}  admits a unique solution $\Psi\in X_T$.
\end{theorem}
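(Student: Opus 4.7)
The plan is to set up both parts of the theorem by the same Banach contraction argument applied to the Duhamel formulation
\[
\Psi_j(t) = e^{-itH_0}\Phi_j - i\int_0^t e^{-i(t-\tau)H_0}\bigl(V[\Psi(\tau)]\Psi_j(\tau)\bigr)d\tau,
\]
where $V[\Psi] = \frac12 (-\Delta)^{-1/2}n$ with $n = \sum_k \lambda_k |\Psi_k|^2$. The Dirac group $e^{-itH_0}$ is unitary on every $H^s(\RR^2,\CC^2)$, hence acts as an isometry on the weighted space $H^s(\lambda)$, so the linear part is harmless and the whole difficulty lies in estimating the trilinear nonlinearity $V[\Psi]\Psi$ on a short interval $[-T,T]$.

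For part~(i), when $s \geq 1/2$, I would close the fixed point directly in a ball of $C^0([-T,T],H^s(\lambda))$. The two main ingredients are: first, an $L^\infty$ bound $\|V[\Psi]\|_{L^\infty_x} \leq C(\|n\|_{L^1} + \|n\|_{L^p})$ for some $p>2$, obtained by splitting the convolution with the two-dimensional Riesz kernel $|x|^{-1}$ according to whether $|x-y|\le 1$ or not, where $\|n\|_{L^1}\leq \|\Psi\|_{L^2(\lambda)}^2$ and $\|n\|_{L^p}$ is handled through the Sobolev embedding $H^s(\RR^2) \hookrightarrow L^{2p}(\RR^2)$; second, a fractional Leibniz (Kato--Ponce) estimate that splits $\|V[\Psi]\Psi_j\|_{H^s}$ into the product $\|V[\Psi]\|_{L^\infty}\|\Psi_j\|_{H^s}$ plus a cross term controlling fractional derivatives of $V[\Psi]$ against $\Psi_j$ in a suitable $L^q$. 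Summing over $j$ with the weights $\lambda_j$ yields $\|V[\Psi]\Psi\|_{H^s(\lambda)}\leq C\|\Psi\|_{H^s(\lambda)}^3$ together with its difference version, and a contraction follows for $T$ sufficiently small.

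For part~(ii), when $3/8 < s < 1/2$, the Sobolev embedding no longer provides $V[\Psi]\in L^\infty$, so I would enlarge the working space to $X_T = C^0([-T,T],H^s(\lambda)) \cap \mathcal{L}_T$, where $\mathcal{L}_T$ is an auxiliary Strichartz-type norm. Since $H_0^2 = -\Delta\,I_2$, the identity
\[
e^{-itH_0} = \cos\bigl(t\sqrt{-\Delta}\bigr)\,I_2 - i\,\frac{\sin\bigl(t\sqrt{-\Delta}\bigr)}{\sqrt{-\Delta}}\,H_0
\]
transfers the dispersive analysis of the Dirac propagator to that of the 2D half-wave group, whose wave-admissible Strichartz inequalities in $L^q_tL^r_x$ carry the usual derivative loss $\gamma = 1 - 1/q - 2/r$. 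The decisive step is then to generalize these estimates to mixed quantum states, in the spirit of F.~Castella~\cite{Cas}: a $TT^\star$ duality argument on Hilbert--Schmidt operators built from the sequence $(\sqrt{\lambda_j}\,\Psi_j)_j$ produces an estimate directly on the density $n$, typically of the form $\|n\|_{L^{q/2}_t L^{r/2}_x} \leq C\|\Phi\|_{H^s(\lambda)}^2$, at a strictly lower regularity index than the na\"{i}ve single-particle inequality would allow. Choosing the admissible pair appropriately then yields a trilinear estimate for $V[\Psi]\Psi$ in a dual Strichartz space $X_T^\star$, controlled by $\|\Psi\|_{X_T}^3$, which closes the contraction on $X_T$.

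The main obstacle is precisely this mixed-state Strichartz estimate for the Dirac group in $\RR^2$, together with the subsequent balancing of admissible exponents. Because 2D wave Strichartz always suffers a derivative loss (the sharp endpoint is known to fail), because $(-\Delta)^{-1/2}$ only gains one derivative, and because the range $s \in (3/8, 1/2)$ leaves essentially no slack, the trade-off between integrability gained from dispersion and regularity required by the nonlinear map must be pushed to its limit; it is exactly the gain brought by the mixed-state framework, over a direct application of pure-state Strichartz, that makes the threshold $s > 3/8$ reachable.
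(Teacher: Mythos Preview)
For part~(i) your outline is essentially the paper's argument, but there is a genuine gap at the endpoint $s=1/2$. Your $L^\infty$ bound on $V$ via the near/far splitting requires $\|n\|_{L^p}$ for some $p>2$, hence $\Psi_j\in L^{2p}$ with $2p>4$; in $\RR^2$ the embedding $H^{1/2}\hookrightarrow L^4$ is sharp, so at $s=1/2$ you only obtain $n\in L^2\cap L^1$, and since $|y|^{-1}\notin L^2(B_1)$ in two dimensions the near-field integral $\int_{|y|\le 1}|y|^{-1}n(x-y)\,dy$ is not controlled. The paper handles this endpoint by replacing the splitting argument with the Hardy inequality in $\dot H^{1/2}(\RR^2)$,
\[
\int_{\RR^2}\frac{|u(y)|^2}{|y|}\,dy\ \lesssim\ \|u\|_{\dot H^{1/2}}^2,
\]
applied with $u=\Psi_j(x-\cdot)$, yielding directly $\|V\|_{L^\infty}\lesssim\sum_j\lambda_j\|\Psi_j\|_{\dot H^{1/2}}^2\leq\|\Psi\|_{H^s(\lambda)}^2$ for every $s\ge 1/2$. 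The remaining pieces of your plan (Kato--Ponce/paraproduct for the cross term, contraction on a ball of $C^0_TH^s(\lambda)$) match the paper.

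For part~(ii) your diagnosis of the mechanism is off. The paper's mixed-state Strichartz estimate is \emph{not} a $TT^\star$/Hilbert--Schmidt argument yielding a gain on the density $n$; it is simply the single-function wave Strichartz inequality $\|K_\pm(t)u_j\|_{L^q_T B^{-\sigma}_{r,2}}\lesssim\|u_j\|_{L^2}$ transferred to the weighted $\ell^2(\lambda)$-spaces by Minkowski's inequality (using $q\ge 2$, and the reverse Minkowski on the dual side since $q'\le 2$). There is no summability gain over pure-state Strichartz. The threshold $s>3/8$ comes entirely from the \emph{single-particle} trilinear bound
\[
\big\|\mathcal V(|u|^2)\,u\big\|_{H^s}\ \lesssim\ \|u\|_{H^s}^2\,\|u\|_{B^{s-\sigma}_{r,2}},
\]
established via Besov paraproduct calculus and Hardy--Littlewood--Sobolev, under the admissibility constraint $r>1/(4s-3/2)$, which is exactly what forces $s>3/8$. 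A density-level estimate of the Castella / Frank--Lewin--Lieb--Seiringer type might conceivably push below $3/8$, but it is neither what the paper does nor what is needed here, and attributing the $3/8$ threshold to a mixed-state gain misidentifies the argument.
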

 The proof of this theorem is given in Section \ref{wholedomain}.

\subsection*{Case of a bounded domain $\Omega$} 
Let us now describe the functional framework and our main result for the case of a bounded domain $\Omega\subset \RR^2$. We assume that $\Omega$ is regular, i.e. that its boundary $\pa\Omega$ is an infinitely differentiable manifold in $\RR$ and that $\Omega$ is locally sited in the same side with respect to $\pa\Omega$.

Recall that $A=-\Delta_\Omega$ is the opposite of the Dirichlet Laplacian on $L^2(\Omega,\CC^2)$, of domain $H^2(\Omega)\cap H^1_0(\Omega)$. Let $(e_p)_{p\in\NN}$ denote an orthonormal basis of eigenvectors of $A$, and let $(\mu_p)_{p\in\NN}$, $\mu_p >0$,  be the associated eigenvalues. The most convenient scale of Sobolev spaces adapted to the resolution of our problem is the scale of domains of the fractional order Dirichlet Laplacian operator, that we shall denote by
\be
\label{Hsnorm}
H^s_A=D(A^{s/2})=\{u\in L^2(\Omega,\CC^2)\,:\; \|u\|_{H^s_A}^2=\sum_{p\in \NN}\mu_p^s \,|\langle u,e_p\rangle|^2<+\infty\},
\ee
for $s\in \RR_+$. We recall in Appendix \ref{appB} several properties of these spaces that are useful in this article.

The domain of the self-adjoint operator $\widetilde H_0$ defined by \eqref{Htilde0} is $D(\widetilde H_0)=H^{1}_A$ and for all $\Psi\in D(\widetilde H_0)$, one has
$$\widetilde H_0 \Psi=\sum_{p\in \NN}\mu_p^{1/2} \sigma_1\Psi_p e_p(x),\qquad \mbox{where}\quad \Psi(x)=\sum_{p\in \NN}\Psi_p\,e_p(x).$$
Moreover, for all surface density $n\in L^2(\Omega)$, the selfconsistent potential $V$ written in \fref{poisson2} is precisely defined by
$$V(t,x)=\frac{1}{2}\sum_{p\in \NN}\mu_p^{-1/2}n_p(t)\,e_p(x),\qquad \mbox{where}\quad n(t,x)=\sum_{p\in \NN}n_p(t)\,e_p(x).$$

As above, we introduce the vector-valued functional space adapted to our problem. Recall that $\lambda=(\lambda_j)_{j\in \NN}$ is a fixed $\ell^1$ sequence of positive real numbers.
\begin{definition}\label{SpacesDef}
Let $s>0$, we denote
$$H^s_A(\lambda)=\{\Phi=(\Phi_j)_{j\in \NN}\in (H^s_A)^\NN\,: \; \|\Phi\|_{H^s_A(\lambda)}^2=\sum_j\lambda_j \|\Phi_j\|_{H^s_A}^2< +\infty \}.$$
\end{definition}
We are now able to state our result concerning the case of a bounded domain.
\begin{theorem}\label{main-theorem1}
Let $\frac12\leq s<\frac{5}{2}$. Then there exists $T > 0$ such that, for all $\Phi \in H^s_A(\lambda)$, the Cauchy problem \fref{dirac2}, \fref{poisson2}  admits a unique solution in $\Psi\in C^0([-T,T],H^s_A(\lambda))$.
\end{theorem}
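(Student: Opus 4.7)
The plan is to apply a Banach fixed-point argument to the Duhamel formulation
$$\Psi_j(t)=e^{-it\widetilde H_0}\Phi_j-i\int_0^t e^{-i(t-\tau)\widetilde H_0}\bigl(V(\tau)\Psi_j(\tau)\bigr)\,d\tau,$$
on the space $X_T:=C^0([-T,T],H^s_A(\lambda))$, with $V$ given by \eqref{poisson2}. The first observation is that, since $\sigma_1^2=\mathrm{Id}$ and $\sigma_1$ commutes with the spectral calculus of $A$, one has $\widetilde H_0^2=A$; hence the unitary group $e^{-it\widetilde H_0}=\cos(tA^{1/2})\,\mathrm{Id}-i\sin(tA^{1/2})\sigma_1$ commutes with $A^{s/2}$ and acts isometrically on $H^s_A$ for every $s\ge 0$. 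Therefore the linear part contributes exactly $\|\Phi\|_{H^s_A(\lambda)}$ to the $X_T$ norm, and the whole game is to control the Duhamel term.

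The central nonlinear estimate to establish is
$$\|V\Psi\|_{H^s_A(\lambda)}\lesssim \|\Psi\|_{H^s_A(\lambda)}^3,\qquad \|V\Psi-\widetilde V\widetilde\Psi\|_{H^s_A(\lambda)}\lesssim \bigl(\|\Psi\|^2+\|\widetilde\Psi\|^2\bigr)\|\Psi-\widetilde\Psi\|_{H^s_A(\lambda)}.$$
To prove this I would, first, use the embeddings recalled in Appendix \ref{appB} together with Hölder's inequality to show that, for $\Psi\in H^s_A(\lambda)$ with $s\ge 1/2$, the density $n=\sum_j\lambda_j|\Psi_j|^2$ lies in $H^{s-\varepsilon}_A$ or an analogous Sobolev space on $\Omega$; since $A^{-1/2}$ gains one derivative spectrally, the potential $V=\tfrac12 A^{-1/2}n$ then sits in a space continuously embedded into $L^\infty(\Omega)\cap H^{s+1-\varepsilon}_A$. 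Crucially, $V\in D(A^{1/2})=H^1_0(\Omega)$, so $V$ already vanishes on $\partial\Omega$, which is what will allow the product $V\Psi_j$ to live in the Dirichlet fractional scale. Second, I would combine this with a product rule of the type $\|fg\|_{H^s_A}\lesssim \|f\|_{L^\infty}\|g\|_{H^s_A}+\|f\|_{H^s_A}\|g\|_{L^\infty}$, valid in the Dirichlet fractional scale when the factors satisfy the compatibility conditions encoded by $H^s_A$.

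Once this nonlinear bound is established, a classical contraction argument on the closed ball $\{\|\Psi\|_{X_T}\le 2\|\Phi\|_{H^s_A(\lambda)}\}$, with $T>0$ depending only on $\|\Phi\|_{H^s_A(\lambda)}$, yields existence and uniqueness in $X_T$; the fact that the solution is continuous with values in $H^s_A(\lambda)$ follows from the continuity of the Duhamel integral and the strong continuity of the group. Uniqueness in the full space $C^0([-T,T],H^s_A(\lambda))$ is obtained by applying the same difference estimate and Gronwall.

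The main obstacle is the product rule and the mapping properties of $A^{-1/2}$ in the non-integer Dirichlet scale $H^s_A$: these are not the usual Sobolev spaces $H^s(\Omega)$ but embed boundary conditions (Dirichlet when $s>1/2$, Dirichlet and $Au|_{\partial\Omega}=0$ when $s>5/2$, etc.), and one must verify that multiplication by $V$ does not destroy these conditions. This is exactly where the upper bound $s<5/2$ enters: for $s\ge 5/2$, membership in $H^s_A$ would require $A(V\Psi)|_{\partial\Omega}=0$, which involves the trace of $\nabla V\cdot\nabla\Psi$ and is not generically zero even though $V$ and $\Psi$ individually vanish on $\partial\Omega$. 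Thus the interval $[1/2,5/2)$ is precisely the range in which the Dirichlet boundary conditions of $V$ and $\Psi$ are strong enough, yet not overdetermined, for the product to remain in the admissible scale and for the fixed-point argument to close.
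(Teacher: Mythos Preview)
Your overall strategy---Duhamel formulation, unitarity of $e^{-it\widetilde H_0}$ on $H^s_A$, contraction on a ball in $C^0([-T,T],H^s_A(\lambda))$, and reduction to the cubic bound $\|V\Psi\|_{H^s_A(\lambda)}\lesssim\|\Psi\|_{H^s_A(\lambda)}^3$---is exactly the paper's. Your explanation of the upper threshold $s<5/2$ is also correct. The gap is in the nonlinear estimate, and it shows up precisely at the lower endpoint $s=\tfrac12$.

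First, the symmetric Kato--Ponce rule you write, $\|fg\|_{H^s_A}\lesssim\|f\|_{L^\infty}\|g\|_{H^s_A}+\|f\|_{H^s_A}\|g\|_{L^\infty}$, requires $\Psi_j\in L^\infty$, which fails for $\tfrac12\le s\le 1$ in two dimensions. The paper uses instead an \emph{asymmetric} product estimate (Lemma~\ref{lemmaproducts}\,(i)): $\|uv\|_{H^s_A}\lesssim\|u\|_{H^s_A}(\|v\|_{H^\sigma_A}+\|v\|_{L^\infty})$ with $\sigma\ge\max(s,1)$, applied with $u=\Psi_j$ and $v=V$; only $V$ needs the $L^\infty$ bound.

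Second, and this is the real missing idea, at $s=\tfrac12$ your route to $V\in L^\infty$ via Sobolev embedding breaks down. From $\Psi_j\in H^{1/2}_A\hookrightarrow L^4$ one gets only $n\in L^2$ (not $H^{s-\varepsilon}$), hence $V=\tfrac12 A^{-1/2}n\in H^1_A=H^1_0(\Omega)$, and in two dimensions $H^1\not\hookrightarrow L^\infty$. The paper obtains $\|V\|_{L^\infty}\lesssim\|\Psi\|_{H^{1/2}_A(\lambda)}^2$ by a separate argument: Lemma~\ref{lemmakernel} establishes the pointwise kernel bound $0\le A^{-1/2}f(x)\le\tfrac{1}{2\pi}\int_\Omega\frac{f(y)}{|x-y|}\,dy$ for $f\ge 0$, proved via the Caffarelli--Silvestre type harmonic extension to the half-cylinder $\Omega\times(0,\infty)$ and the maximum principle. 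The right-hand side is then controlled by the Hardy inequality in $\dot H^{1/2}(\RR^2)$ applied to the zero extension of $\Psi_j$. Without this ingredient the fixed-point argument does not close at $s=\tfrac12$.
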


This result is proved in Section \ref{boundeddomain}.
 
    
\section{The case $\Omega=\RR^2$: proof of Theorem \ref{main-theorem2}}\label{wholedomain}   

In this section, we study \eqref{dirac}, \eqref{poisson} and prove Theorem \ref{main-theorem2}. We proceed into two steps. In a first subsection, we prove Item {\em (i)}: the well-posedness of the Cauchy problem in $H^s(\RR^2)$, for all $s\geq \frac 12$. Then, in a second subsection, we prove Item {\em (ii)} and treat the cases $\frac38<s<\frac12$, for which we need to derive suitable Strichartz inequalities.

\subsection{Proof of Theorem \ref{main-theorem2} for $\mathbf {s\geq \frac 12}$ (Item \mbox{\em(i)})}

Let us introduce  the unitary group $K(t)$ defined on $L^2(\RR^2,\CC^2)$ by
\beq\label{kernel} 
K(t)=\cos(t(-\Delta)^{1/2}){\mathrm I}_2-i(\sigma\cdot\nabla_x)(-\Delta)^{-1/2} \sin(t(-\Delta)^{1/2}),
\eeq
where ${\mathrm I}_2$ is the 2x2 identity matrix and $\sigma$ is defined by \eqref{paulimatrices}. It is straightforward to see that the operators $K(t)$ and $(1-\Delta)^{s/2}$ commute together, hence $K(t)$ is also an isometry on any $H^s(\RR^2,\CC^2)$. Moreover, $K(t)$ is the free propagator generated by the linear Dirac operator. Indeed, differentiating the linear Dirac equation
$$\partial_t \psi=-\sigma\cdot\nabla_x\psi,\quad \psi(0)=\phi$$
with respect to the time variable yields the linear wave equation
\beq\label{linearWave}
\partial_t ^2\psi=\Delta\psi
\eeq
with the Cauchy data
\beq\label{linearWaveData}
 (\psi(0),\partial_t\psi(0))=(\phi,-\sigma\cdot\nabla\phi).
 \eeq
 The solution of \eqref{linearWave}, \eqref{linearWaveData} can represented as
 $$\psi(t)=K(t)\phi.$$

Now, our problem \fref{dirac}, \fref{poisson}  can be rewritten under the following mild formulation
\beq \label{integral-form}
\Psi_j(t)=K(t) \Phi_j + \int_0^tK(t-t')F_j(\Psi(t'))dt'.\eeq
Here, we have denoted $F_j(\Psi)=V(\Psi)\Psi_j$, where the selfconsistent potential $V(\Psi)$ associated to a vector wavefunction $\Psi=(\Psi_j)_{j\in \NN}$ is defined by
\begin{equation}
\label{poissonbis}
V(\Psi)(x)=\frac{1}{4\pi}\sum_{j\in \NN}\lambda_j\int_{\RR^2}\frac{|\Psi_j(y)|^2}{|x-y|}dy.
\end{equation}

Let $s\geq\frac{1}{2}$ and $\Phi\in H^s(\lambda)$. In order to solve \fref{integral-form}, for $T>0$ we introduce the mapping $S=(S_j)_{j\in \NN}$ on $C^0([-T,T],H^{s}(\lambda))$ defined by 
 \begin{equation}\label{Adef}
 S_j(\Psi)(t)=K(t) \Phi_j + \int_0^tK(t-t')F_j(\Psi(t'))dt',
 \end{equation}
Let $M > 2 \|\Phi\|_{H^s(\lambda)}$ and define
 \beq\label{B_Mdef}
 B_M= \{ \Psi \in  C^0([-T,T],H^{s}(\lambda))\,:\; \max_{t\in[-T,T]}\|\Psi(t)\|_{H^{s}(\lambda)}\leq M\}.\eeq
To prove Item $(i)$ of Theorem \ref{main-theorem2} (the case $s\geq \frac 12$), it is sufficient to show that $S$ is a contraction mapping on $B_{M}$ for $T>0$ small enough.  Let $\Psi(t)=(\Psi_j(t))_{j\in \NN} \in B_M$. Since $K(t)$ is a unitary group on $H^s(\RR^2)$, we have clearly
$$\max_{t\in[-T,T]}\|S(\Psi)(t)\|_{H^s(\lambda)} \leq \|\Phi\|_{H^s(\lambda)}+\int_{-T}^T\|F(\Psi)(\tau)\|_{H^s(\lambda)}d\tau,$$
where $F(\Psi)=(F_j(\Psi))_{j\in\NN}=(V(\Psi)\Psi_j)_{j\in\NN}$. Now, we claim that, for all $\Psi\in H^s(\lambda)$, one has
\be
\label{claim}\|F(\Psi)\|_{H^s(\lambda)}\lesssim \|\Psi\|_{H^s(\lambda)}^3.
\ee
Assuming this claim, we deduce that, for $\Psi \in B_M$, there exists a constant $C_1 > 0$ such that
 $$ \max_{t\in[-T,T]}\|S(\Psi)(t)\|_{H^s(\lambda)} \leq \|\Phi\|_{H^s(\lambda)}+C_1T\max_{t\in[-T,T]}\|\Psi(t)\|_{H^s(\lambda)}^3.$$
Similarly, for $\Psi, \widetilde\Psi \in B_M$, one can prove that
$$  \max_{t\in[-T,T]}\|(S(\Psi)-S(\widetilde \Psi))(t)\|_{H^s(\lambda)} \leq C_2M^2T\max_{t\in[-T,T]}\|(\Psi-\widetilde \Psi)(t)\|_{H^s(\lambda)}.$$
 Then, if we choose $T$ such that 
 $$\max(C_1,C_2)M^2T\leq \frac12,$$
 recalling that $M \geq 2 \|\Phi\|_{H^s(\lambda)}$,
 one deduces that $S$ is a contraction mapping on $B_M$ and Theorem \ref{main-theorem1} is proved. 

 It remains to prove \fref{claim}. From paradifferential calculus and the Mikhlin-H\"ormander multiplier theorem \cite{ALG,SCH}, one has, for all $j$,
 \begin{equation}
 \label{MH}
 \|F_j(\Psi)\|_{H^s}=\|V(\Psi)\Psi_j\|_{H^s}\lesssim \|V(\Psi)\|_{W^{s,4}}\|\Psi_j\|_{L^4}+\|V(\Psi)\|_{L^\infty}\|\Psi_j\|_{H^s}.
 \end{equation}
 Let us estimate the first term in the right-hand side of \eqref{MH}. From \eqref{poissonbis}, the Minkowski inequality and Hardy-Littlewood-Sobolev, we have
 \begin{align*}
 \|V(\Psi)\|_{W^{s,4}}=\|(1-\Delta)^{s/2}V(\Psi)\|_{L^4}&\lesssim \sum_{j\in \NN}\lambda_j\left\|\int_{\RR^2}\frac{\left((1-\Delta)^{s/2}|\Psi_j|^2\right)(y)}{|x-y|}dy\right\|_{L^4}\\
 &\lesssim \sum_{j\in \NN}\lambda_j\left\|(1-\Delta)^{s/2}\left(|\Psi_j|^2\right)\right\|_{L^{4/3}}\\
 &\lesssim \sum_{j\in \NN}\lambda_j\left\|\Psi_j\right\|_{H^s}\left\|\Psi_j\right\|_{L^4}.
  \end{align*}
 Note that, for the last line, we used again Mikhlin-H\"ormander. Hence, from the Sobolev embedding $H^{1/2}(\RR^2)\hookrightarrow L^4(\RR^2)$, Definition \ref{SpacesDef2} and $s\geq 1/2$, one gets
 \begin{equation}
 \label{MH1}
 \|V(\Psi)\|_{W^{s,4}}\|\Psi_j\|_{L^4}\leq \|\Psi\|_{H^s(\lambda)}^2\|\Psi_j\|_{H^s}.
 \end{equation}
 For the second term in \eqref{MH}, we write
 \begin{align}
 \|V(\Psi)\|_{L^\infty}&=\frac{1}{4\pi}\supess_{x\in \RR^2}\sum_{j\in \NN}\lambda_j\int_{\RR^2}\frac{|\Psi_j(y)|^2}{|x-y|}dy\lesssim\sum_{j\in \NN}\lambda_j\supess_{x\in \RR^2}\int_{\RR^2}\frac{|\Psi_j(x-y)|^2}{|y|}dy\nonumber\\
&\lesssim\sum_{j\in \NN}\lambda_j\supess_{x\in \RR^2}\int_{\RR^2}|(-\Delta_y)^{1/4}\Psi_j(x-y)|^2dy\nonumber\\
&\quad= \sum_{j\in \NN}\lambda_j\|\Psi_j\|_{\dot H^{1/2}}^2\leq \|\Psi\|_{H^s(\lambda)}^2\label{MH2}
 \end{align}
 where we used the Hardy inequality \cite{tao} in $\dot H^{1/2}(\RR^2)$, the homogeneous Sobolev space \cite{BELO}.
 Finally, inserting \eqref{MH1} and \eqref{MH2} in \eqref{MH} yields
 $$\|F_j(\Psi)\|_{H^s}\lesssim\|\Psi\|_{H^s(\lambda)}^2\|\Psi_j\|_{H^s}$$
 which directly implies \eqref{claim}. The proof of Theorem \ref{main-theorem2} in the case $s\geq 1/2$ (Item $(i)$) is complete.
 \qed

\subsection{Proof of Theorem \ref{main-theorem2} for $\mathbf {\frac38<s<\frac12}$ (Item \mbox{\em(ii)})}\label{wholedomain2}

In order to prove Item {\em (ii)} of Theorem \ref{main-theorem2}, we first derive a generalization of the Strichartz inequalities in the case of weighted Besov spaces, analogously to \cite{Cas} in the case of the Schr\"odinger equation (on this subject, we also refer the reader to the recent work \cite{frank} on Strichartz inequalities for orthonormal functions). 

Let us introduce the adapted functional framework. We first recall the definition of the Besov spaces, see e.g. \cite{BELO}.  Let $\varphi\in \mathcal{S}(\RR^2)$ (the Schwartz space) such that supp$\varphi=\{\xi:\, 2^{-1}\leqslant |\xi|\leqslant 2\}$, $\varphi(\xi) >0$ for $2^{-1}<|\xi|< 2$ and $\sum_{k\in \ZZ}\varphi(2^{-k}\xi)=1, (\xi\neq 0)$. We denote by $\{\varphi_k\}_{0 \leq k<+\infty}$ the Littlewood-Paley dyadic decomposition basis defined by $\mathcal{F}\varphi_k(\xi)=\varphi(2^{-k}\xi), \; 1 \leq k<+\infty$ and $\mathcal{F}\varphi_0(\xi)=1-\sum_1^{+\infty}\varphi(2^{-k}\xi)$ where $\mathcal{F}$ denotes the Fourier transform.
\begin{definition}\label{BesovDef}
Let $s\in \RR$, $1\leq p,r\leq +\infty$. We recall the usual definition
$$\|u\|_{B^s_{p,r}}=\|\varphi_0\ast u\|_p+\left( \sum_{k=1}^{+\infty}2^{skr}\|\varphi_k\ast u\|_p^r  \right)^{1/r}$$
where $\|\cdot\|_p$ denotes the $L^p(\RR^2,\CC^2)$ norm. The Besov space is defined by
$$B^s_{p,r}=\{u\in \SC'(\RR^2)\,:\, \|u\|_{B^s_{p,r}}< +\infty \}.$$
For vectorial functions $\Phi=(\phi_j)_{j\in \NN}$ and for all $\ell^1$ sequence of positive real numbers $\lambda=(\lambda_j)_{j\in \NN}$, we define the corresponding Besov space
$$B^s_{p,r}(\lambda)=\{\Phi=(\Phi_j)_{j\in \NN}\,:\,\|\Phi\|_{B^s_{p,r}(\lambda)}^2=\sum_j\lambda_j \|\Phi_j\|_{B^s_{p,r}}^2< +\infty\}.$$
Finally, for $T> 0, q\in[1,+\infty] $, and for any Banach space $X$, we denote 
$$L^q_TX=L^q([-T,T], X).$$
\end{definition}
Recall that the space $B^s_{2,2}$ can be identified with the usual Sobolev space $H^s$.

\bs
Denote by $K_{\pm}(t)=\exp(\pm it (-\Delta)^{1/2})$ be the two operators composing the Dirac propagator $K(t)$ defined by \eqref{kernel}. These two operators satisfy the following Strichartz estimates measured in the vector-valued spaces introduced above.
\begin{proposition}[\textbf{Strichartz estimates}]\label{Strichartz}
For  $2\leq q_j,r_j\leq  +\infty$ such that
\beq\label{admissible-relation}
\frac{2}{q_j}=\frac{1}{2}-\frac{1}{r_j}, \qquad 2s_j=3\left(\frac{1}{2}-\frac{1}{r_j}\right),
\eeq
and for all $T\in (0,\infty]$, the following estimates hold
\beq\label{strichratz1}
\|K_{\pm}(t)u\|_{L^{q_1}_TB^{-s_1}_{r_1,2}(\lambda)}\lesssim \|u\|_{L^2(\lambda)}
\eeq
and
\beq\label{strichratz2}
\left\|\int_{t'<t}K_{\pm}(t-t')f(t')dt'\right\|_{L^{q_2}_TB^{-s_2}_{r_2,2}(\lambda)}\lesssim \|f\|_{L^{q_3'}_TB^{s_3}_{r_3',2}(\lambda)}
\eeq
where $p'$ denotes the dual exponent to $p$ defined by $1/p+1/p'=1$.
\end{proposition}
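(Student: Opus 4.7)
The plan is to establish the scalar-function Strichartz estimates for the half-wave propagators $K_\pm(t)=e^{\pm it(-\Delta)^{1/2}}$ by the classical dispersive estimate plus $TT^*$ argument, and then upgrade them to the mixed-state weighted setting by a Minkowski-type inequality, in the spirit of Castella's treatment of the Schr\"odinger case in \cite{Cas}. The Besov formulation is what allows one to cover the full admissibility range $2\leq r_j\leq +\infty$, including the endpoint $r_j=+\infty$ where a naive $L^\infty$-valued wave Strichartz fails.

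For the scalar homogeneous estimate, I would first recall the standard dispersive bound on frequency-localized data: in dimension~$2$, stationary phase applied to the oscillatory kernel of $K_\pm(t)$ cut off to frequencies $|\xi|\sim 2^k$ yields
$$\|K_\pm(t)(\varphi_k\ast f)\|_{L^\infty}\lesssim 2^{3k/2}|t|^{-1/2}\|\varphi_k\ast f\|_{L^1}.$$
Interpolating this with the trivial $L^2$-isometry of $K_\pm(t)$, the $TT^*$ argument then produces an $L^{q_1}_tL^{r_1}_x$ Strichartz estimate on each Littlewood--Paley piece with the scaling \eqref{admissible-relation}, and square-summation in the dyadic index reassembles these into the scalar Besov-valued bound
$$\|K_\pm(t) u\|_{L^{q_1}_T B^{-s_1}_{r_1,2}}\lesssim \|u\|_{L^2}.$$
To pass to the mixed-state estimate \eqref{strichratz1}, I would observe that the admissibility relation forces $q_1\geq 4\geq 2$, so Minkowski's inequality applied in $L^{q_1/2}_T$ gives
$$\|K_\pm u\|_{L^{q_1}_T B^{-s_1}_{r_1,2}(\lambda)}^2 = \left\|\sum_j \lambda_j\|K_\pm u_j\|_{B^{-s_1}_{r_1,2}}^2\right\|_{L^{q_1/2}_T}\leq \sum_j \lambda_j\|K_\pm u_j\|_{L^{q_1}_T B^{-s_1}_{r_1,2}}^2\lesssim \sum_j \lambda_j\|u_j\|_{L^2}^2,$$
which is exactly \eqref{strichratz1}.

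For the retarded inhomogeneous estimate \eqref{strichratz2}, I would first dualize the scalar homogeneous bound to obtain $\|\int_\RR K_\pm(-t') f(t')\,dt'\|_{L^2}\lesssim \|f\|_{L^{q_3'}_t B^{s_3}_{r_3',2}}$, compose it with the homogeneous estimate to produce an \emph{untruncated} bilinear Strichartz on $\RR\times\RR$, and then appeal to the Christ--Kiselev lemma to restrict the time integration to $t'<t$ (this is valid in the off-diagonal range $q_2>q_3'$; the diagonal endpoint requires a separate bilinear $TT^*$ argument in the spirit of Keel--Tao). The same Minkowski inequality then lifts the resulting scalar retarded bound to the $\lambda$-weighted spaces. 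The main technical obstacle is the careful handling of the frequency-dependent derivative loss at the endpoint $r_j=+\infty$, where one must genuinely work in the Besov scale with second index $2$; by contrast the mixed-state upgrade itself is elementary, but depends crucially on the fact that both the $\lambda$-weighting and the second Besov index are $\ell^2$, so that they match the temporal exponent $q_j\geq 2$ in the Minkowski step.
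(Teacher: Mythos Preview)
Your approach is essentially the paper's: take the scalar Strichartz estimates for the half-wave propagator as known (the paper simply cites them from the literature, whereas you sketch a derivation via the frequency-localized dispersive bound, $TT^*$, and Christ--Kiselev), and then lift to the $\lambda$-weighted spaces by a Minkowski argument. Your treatment of the homogeneous estimate \eqref{strichratz1} matches the paper's exactly.

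There is, however, a gap in your handling of the inhomogeneous estimate \eqref{strichratz2}. You write that ``the same Minkowski inequality'' lifts the scalar retarded bound to the weighted spaces, and that this ``depends crucially on \dots the temporal exponent $q_j\geq 2$''. But the right-hand side of \eqref{strichratz2} carries the \emph{dual} exponent $q_3'$, and since \eqref{admissible-relation} forces $q_3\geq 4$, one has $q_3'\leq 4/3$ and hence $q_3'/2<1$. Ordinary Minkowski in $L^{q_3'/2}_T$ therefore goes the wrong way. What the paper uses, and what you need, is the \emph{reverse} Minkowski inequality for nonnegative functions in $L^p$ with $0<p<1$: setting $h_j(t)=\|f_j(t)\|_{B^{s_3}_{r_3',2}}^2$, one has
$$\sum_j \lambda_j \|h_j\|_{L^{q_3'/2}_T}\leq \Bigl\|\sum_j \lambda_j h_j\Bigr\|_{L^{q_3'/2}_T},$$
which is precisely the direction needed to pass from the componentwise scalar bound $\sum_j\lambda_j\|f_j\|_{L^{q_3'}_TB^{s_3}_{r_3',2}}^2$ back to $\|f\|_{L^{q_3'}_TB^{s_3}_{r_3',2}(\lambda)}^2$. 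This reverse-Minkowski step is the one non-obvious point in the paper's proof; the second Besov index plays no role in the mixed-state lift.
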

The norm $L^2(\lambda)$ in the right-hand of \eqref{strichratz1}, is defined as the $H^s(\lambda)$ and $B^s_{p,q}(\lambda)$ norms by
$$\|\Phi\|_{L^2(\lambda)}^2=\sum_j\lambda_j \|\Phi_j\|_{L^2}^2\,.$$
\begin{proof}
This proof is a combination of Minkowski's inequality and the standard Strichartz estimates for the wave equation. Recall first that, for $s_i$, $q_i$, $r_i$ as satisfying \fref{admissible-relation} and for functions $v$ and $h$ from $\RR^2$ to $\CC^4$, one has \cite{GIVE2,Mach,MATSU2}
\beq\label{strichratz1scal}
\|K_{\pm}(t)v\|_{L^{q_1}_TB^{-s_1}_{r_1,2}}\lesssim \|v\|_{2}
\eeq
and
\beq\label{strichratz2scal}
\left\|\int_{t'<t}K_{\pm}(t-t')h(t')dt'\right\|_{L^{q_2}_TB^{-s_2}_{r_2,2}}\lesssim \|h\|_{L^{q_3'}_TB^{s_3}_{r_3',2}}.
\eeq
Hence, for $u=(u_j)_{j\in \NN}$
\bee
\|K_{\pm}(t)u\|_{L^{q_1}_TB^{-s_1}_{r_1,2}(\lambda)}^2&=&\left(\int_{-T}^{T} \left(\sum_j \lambda_j\|K_{\pm}(t)u_j\|_{B^{-s_1}_{r_1,2}}^2\right)^{q_1/2}dt\right)^{2/q_1}\\
&=&\left\|\sum_j \lambda_j\|K_{\pm}(\cdot)u_j\|_{B^{-s_1}_{r_1,2}}^2\right\|_{q_1/2}\\
&\leq&\sum_j \lambda_j\left\|\|K_{\pm}(\cdot)u_j\|_{B^{-s_1}_{r_1,2}}^2\right\|_{q_1/2}=\sum_j \lambda_j\|K_{\pm}(\cdot)u_j\|_{L^{q_1}_TB^{-s_1}_{r_1,2}}^2\\
&\lesssim&\sum_j \lambda_j\|u_j\|_{2}^2=\|u\|_{L^2(\lambda)}^2
\eee
where we have used the Minkowski's inequality \cite{ineq} (notice that $q_1/2\geq 1$) and the Strichartz estimate \fref{strichratz1scal}. This proves \fref{strichratz1}.
Let us prove \fref{strichratz2}. For $f=(f_j)_{j\in \NN}$, denoting $$g_j(t)=\int_{t'<t}K_{\pm}(t-t')f_j(t')dt',$$ we estimate similarly (we have also $q_2/2>1$)
\bee
&&\left\|\int_{t'<t}K_{\pm}(t-t')f(t')dt'\right\|_{L^{q_2}_TB^{-s_2}_{r_2,2}(\lambda)}^2=\left(\int_{-T}^{T} \left(\sum_j \lambda_j\|g_j(t)\|_{B^{-s_2}_{r_2,2}}^2\right)^{q_2/2}dt\right)^{2/q_2}\\
&&\qquad \leq \sum_j \lambda_j\|g_j\|_{L^{q_2}_TB^{-s_2}_{r_2,2}}^2\lesssim\sum_j \lambda_j \|f_j\|_{L^{q_3'}_TB^{s_3}_{r_3',2}}^2=\sum_j \lambda_j\left\|\|f_j\|_{B^{-s_3}_{r_3',2}}^2\right\|_{q_3'/2}\\
&&\qquad \qquad \qquad \qquad \qquad \qquad \qquad \qquad\leq \left\|\sum_j \lambda_j\|f_j\|_{B^{-s_3}_{r_3',2}}^2\right\|_{q_3'/2}=\|f\|_{L^{q_3'}_TB^{s_3}_{r_3',2}(\lambda)}^2
\eee
where we have used \fref{strichratz2scal} and the reverse Minkowski's inequality \cite{ineq} (note that we have necessarily $q_3>2$, so $q_3'/2< 1$). The proof of the Proposition is complete.
\end{proof}

\begin{proof}[Proof of Item $(ii)$ of Theorem \pref{main-theorem2} (the case $\frac38<s<\frac12$).] Let $ s\in(\frac{3}{8},\frac12)$ and $\Phi\in H^s(\lambda)$. Let us fix once for all $r,q, \sigma$ satisfying
\begin{equation}\label{relation}
r>\frac{1}{4s-3/2},\qquad\frac{2}{q}=\frac{1}{2}-\frac{1}{r},\qquad
2\sigma=3\left(\frac{1}{2}-\frac{1}{r}\right).
\end{equation}
We are now ready to define the subspace $X_T\subset C^0([-T,T],H^s(\lambda))$ used in Theorem \ref{main-theorem2}. For $T>0$, define
\begin{equation}
\label{defXT}X_T=C^0([-T,T],H^s(\lambda))\bigcap L^q_TB^{s-\sigma}_{r,2}(\lambda),
\end{equation}
with
$$\|\Psi\|_{X_T}=\|\Psi\|_{L^\infty_TH^s(\lambda)}+\|\Psi\|_{L^q_TB^{s-\sigma}_{r,2}(\lambda)}.$$
We introduce the following mapping $S$ on $X_T$:
\be
\label{defA}
S(\Psi)_j(t)=K(t) \Phi_j + \int_0^tK(t-t')F_j(\Psi(t'))dt',
\ee
where the operator $K(t)$ is defined by \fref{kernel}. Mild solutions to our problem are fixed-points of $S$. For $M> 2C_1\|\Phi\|_{H^s(\lambda)}$, where $C_1$ is a universal constant that is precised below, we denote
$$\mathcal B_M=\{\Psi \in X_T\,:\, \|\Psi\|_{X_T}\leq M\}.$$
Let us prove that $S$ is a contraction mapping on $\mathcal B_M$ for some $T>0$ small enough. Using \eqref{kernel} and Strichartz estimates given in Proposition \ref{Strichartz}, we get
\begin{align} 
\|S(\Psi)\|_{X_T} \lesssim &\|\Phi\|_{H^s(\lambda)}+\|F(\Psi)\|_{L^1_TH^s(\lambda)}\\
&+\|(\sigma\cdot\nabla_x)(-\Delta)^{-1/2} \Phi\|_{H^s(\lambda)}+\|(\sigma\cdot\nabla_x)(-\Delta)^{-1/2} F(\Psi)\|_{L^1_TH^s(\lambda)}\nonumber\\
\lesssim &\|\Phi\|_{H^s(\lambda)}+\|F(\Psi)\|_{L^1_TH^s(\lambda)},
\label{estimApsi}
\end{align}
where we used that the operator $(\sigma\cdot\nabla_x)(-\Delta)^{-1/2} $ is bounded in any $H^s$. 

Now we claim that, under the assumption $\frac{3}{8}<s< \frac{1}{2}$ and if $r,q,\sigma$ have been chosen in order to satisfy \fref{relation}, then for all $u_1,u_2, u_3 \in H^s\cap B^{s-\sigma}_{r,2}$ we have the estimate
\bea
\nonumber
\left\|{\mathcal V}(u_1u_2)u_3\right\|_{H^s}&\lesssim& \|u_1\|_{H^s}\|u_2\|_{H^s}\|u_3\|_{B^{s-\sigma}_{r,2}}+\|u_2\|_{H^s}\|u_3\|_{H^s}\|u_1\|_{B^{s-\sigma}_{r,2}}\\
&&+\|u_3\|_{H^s}\|u_1\|_{H^s}\|u_2\|_{B^{s-\sigma}_{r,2}}
\label{cl2}
\eea
where we have denoted
$${\mathcal V}(u)=\frac{1}{4\pi|x|}*u.$$
Assuming this claim, let us end the proof of the Theorem. First, we deduce from \fref{cl2} that, for all $\Psi_1=(\Psi_{1j})_{j\in \NN}$, $\Psi_2=(\Psi_{2j})_{j\in \NN}$, $\Psi_3=(\Psi_{3j})_{j\in \NN}$ belonging to the space $H^s(\lambda)\cap B^{s-\sigma}_{r,2}(\lambda)$,
\bea
\nonumber
&&\hspace*{-4ex}\left\|{\mathcal V}\left(\sum_j\lambda_j\Psi_{1j}\Psi_{2j}\right)\Psi_3\right\|_{H^s(\lambda)}^2= \sum_k\lambda_k\left\|\sum_j\lambda_j {\mathcal V}(\Psi_{1j}\Psi_{2j})\Psi_{3k}\right\|_{H^s}^2\nonumber\\
&&\leq \sum_k\lambda_k\left(\sum_j\lambda_j\left\|{\mathcal V}(\Psi_{1j}\Psi_{2j})\Psi_{3k}\right\|_{H^s}\right)^2\nonumber\\
&&\lesssim \sum_k\lambda_k\left(\sum_j\lambda_j\|\Psi_{1j}\|_{H^s}\|\Psi_{2j}\|_{H^s}\right)^2\|\Psi_{3k}\|_{B^{s-\sigma}_{r,2}}^2\nonumber\\
&&\quad + \sum_k\lambda_k\left(\sum_j\lambda_j\|\Psi_{1j}\|_{B^{s-\sigma}_{r,2}}\|\Psi_{2j}\|_{H^s}\right)^2\|\Psi_{3k}\|_{H^s}^2\nonumber\\
&&\quad +\sum_k\lambda_k\left(\sum_j\lambda_j\|\Psi_{1j}\|_{H^s}\|\Psi_{2j}\|_{B^{s-\sigma}_{r,2}}\right)^{2}\|\Psi_{3k}\|_{H^s}^2\nonumber\\
&&\lesssim  \|\Psi_1\|_{H^s(\lambda)}^2\|\Psi_2\|_{H^s(\lambda)}^2\|\Psi_3\|_{B^{s-\sigma}_{r,2}(\lambda)}^2 +\|\Psi_2\|_{H^s(\lambda)}^2\|\Psi_3\|_{H^s(\lambda)}^2\|\Psi_1\|_{B^{s-\sigma}_{r,2}(\lambda)}^2\nonumber\\
&&\quad +\|\Psi_3\|_{H^s(\lambda)}^2\|\Psi_1\|_{H^s(\lambda)}^2\|\Psi_2\|_{B^{s-\sigma}_{r,2}(\lambda)}^2
\label{cl3}
\eea
where we used the Minkowski and Cauchy-Schwarz inequalities. Next, from \fref{estimApsi}, \fref{cl3} and the H\"older inequality, one deduces that, if $\Psi\in \mathcal B_M$,
\bea
\|S(\Psi)\|_{X_T} &\leq& C_1\|\Phi\|_{H^s(\lambda)}+C_2\|\Psi\|_{L^\infty_TH^s(\lambda)}^2\|\Psi\|_{L^1_TB^{s-\sigma}_{r,2}(\lambda)}\nonumber\\
&\leq& C_1\|\Phi\|_{H^s(\lambda)}+C_2(2T)^{(q-1)/q}\|\Psi\|_{L^\infty_TH^s(\lambda)}^2\|\Psi\|_{L^q_TB^{s-\sigma}_{r,2}(\lambda)}\nonumber\\
&\leq&C_1\|\Phi\|_{H^s(\lambda)}+C_2(2T)^{(q-1)/q}M^3.\label{resu1}
\eea
Similarly, from \fref{cl3}, it can be deduced that,  for all $\Psi$, $\widetilde \Psi\in \mathcal B_M$,
$$\|F(\Psi)-F(\widetilde \Psi)\|_{L^1_TH^s(\lambda)}\leq C_3 (2T)^{(q-1)/q}\left(\|\Psi\|_{X_T}^2+\|\widetilde \Psi\|_{X_T}^2\right)\|\Psi-\widetilde\Psi\|_{X_T}$$
and then, from \fref{defA} and  Proposition \ref{Strichartz}, that
\be
\label{resu2}
\|S(\Psi)-S(\widetilde \Psi)\|_{X_T} \leq C_4 T^{(q-1)/q}M^2\|\Psi-\widetilde \Psi\|_{X_T}.
\ee
{}From these estimates one can conclude the proof. Indeed, choosing $T>0$ small enough such that
$$C_2(2T)^{(q-1)/q}M^3<\frac{M}{2}\quad \mbox{and}\quad C_4 T^{(q-1)/q}M^2<\frac{1}{2},$$
then from $M\geq 2C_1\|\Phi\|_{H^s(\lambda)}$, \fref{resu1} and \fref{resu2} one deduces that $S$ is a contraction mapping on $\mathcal B_M$, which proves Theorem \ref{main-theorem2}.

\bs
Let us prove the claim \fref{cl2}. In fact, for the sake of simplicity, we shall only prove this estimate for $u_1=\overline{u_2}=u_3=u$, the general case can be proved very similarly. We have then to prove that
\be
\label{cl4}
\left\|{\mathcal V}(|u|^2)u\right\|_{H^s}\lesssim \|u\|_{H^s}^2\|u\|_{B^{s-\sigma}_{r,2}}.
\ee
This inequality will result from paradifferential calculus. Let us first fix a few parameters. Let $\omega$ be such that $2/r-\omega=s-\sigma$, i.e. 
$$\omega=\frac{2}{r}+\sigma-s=\frac{1}{2r}+\frac{3}{4}-s.$$ {}From \fref{relation} and $3/8<s<1/2$, one can deduce that $r>2$ and $0<\omega<s$. Now, let us fix $p_1,p_2\in (2,+\infty)$ and $p_3,p_4\in(1,2)$ such that 
$$0<\frac{2}{p_2}<\min\left(\omega,\frac{2}{r}\right),\qquad \frac{1}{p_1}+\frac{1}{p_2}=\frac{1}{2},\qquad \frac{1}{p_3}=\frac{1}{p_1}+\frac{1}{2},\qquad \frac{1}{p_4}=\frac{1}{p_2}+\frac{1}{2}.$$
Paraproduct estimates and Besov embedding theorems \cite{Danchin,RS} give
\begin{align}
\|{\mathcal V}(|u|^2)u\|_{B^s_{2,2}}&\lesssim\|{\mathcal V}(|u|^2)\|_{L^\infty}\|u\|_{B^s_{2,2}}+\|{\mathcal V}(|u|^2)\|_{B^{s+\omega-2/p_2}_{p_1,2}}\|u\|_{B^{2/p_2-\omega}_{p_2,\infty}}\nonumber\\
&\lesssim\|{\mathcal V}(|u|^2)\|_{\dot B^{2/p_4}_{p_4,1}}\|u\|_{B^s_{2,2}}+\|{\mathcal V}(|u|^2)\|_{B^{s+\omega-2/p_2}_{p_1,2}}\|u\|_{B^{2/r-\omega}_{r,2}},
\label{para1}
\end{align}
where $\dot B^{s}_{p,q}$ denotes the homogeneous Besov space (\cite{BELO}). Let us now estimate ${\mathcal V}(u)$. The operator of convolution with $1/|x|$ in $\RR^2$ is of order -1 in {\em homogeneous} Besov spaces, thus we have
\begin{align}
\|{\mathcal V}(|u|^2)\|_{\dot B^{2/p_4}_{p_4,1}}&\lesssim \||u|^2\|_{\dot B^{2/p_4-1}_{p_4,1}}\nonumber\\
&\lesssim \||u|^2\|_{B^{2/p_4-1}_{p_4,1}}\nonumber\\
& \lesssim \|u\|_{B^{s}_{2,2}}\|u\|_{B^{2/p_4-1-s}_{p_2,2}}\nonumber\\
& \lesssim \|u\|_{B^{s}_{2,2}}\|u\|_{B^{2/r-s}_{r,2}}\lesssim \|u\|_{B^{s}_{2,2}}\|u\|_{B^{2/r-\omega}_{r,2}}
\label{para2}
\end{align}
(here we used that $0<2/p_4-1<s$ and that $0<\omega<s$), and
\begin{align}
\|{\mathcal V}(|u|^2)\|_{B^{s+\omega-2/p_2}_{p_1,2}}&\lesssim \|{\mathcal V}(|u|^2)\|_{L^{p_1}}+\|{\mathcal V}(|u|^2)\|_{\dot B^{s+\omega-2/p_2}_{p_1,2}} \nonumber\\
&\lesssim \||u|^2\|_{L^{p_3}}+\||u|^2\|_{\dot B^{s+\omega-2/p_2-1}_{p_1,2}}\nonumber\\
&\lesssim \|u\|_{L^2}\|u\|_{L^{p_1}}+ \|u\|_{\dot B^{s-2/p_2}_{p_1,2}}\|u\|_{\dot B^{\omega-1}_{\infty,\infty}}\nonumber\\
&\lesssim \|u\|_{B^{2/p_2}_{2,2}}^2+\|u\|_{\dot B^{s}_{2,2}}\|u\|_{\dot B^{\omega}_{2,\infty}}\nonumber\\
&\lesssim\|u\|_{B^s_{2,2}}^2.
\label{para3}
\end{align}
Here we used that $\omega-1<0$, that ${2\over p_{2}} < \omega <s$ and Hardy-Littlewood-Sobolev which yields $\|{\mathcal V}(u)\|_{L^{p_1}}\lesssim \|u\|_{L^{p_3}}$. Finally \fref{para1}, \fref{para2} and \fref{para3} give (since $B^s_{2,2}$ is identified with $H^s$)
$$\|{\mathcal V}(|u|^2)u\|_{H^s}\lesssim \|u\|_{H^s}^2\|u\|_{B^{2/r-\omega}_{r,2}}\,.$$
Since $2/r-\omega=s-\sigma$, this proves \fref{cl4} and the proof of Theorem \ref{main-theorem2} is complete.

\end{proof}

 \section{The case $\Omega$ bounded: proof of Theorem \ref{main-theorem1}}\label{boundeddomain}   

In this section, we prove Theorem \ref{main-theorem1}. Let us first provide some estimates on the products of functions in $H^s_A$ spaces (recall Definition \eqref{Hsnorm}), which will be used in the nonlinear analysis of \fref{dirac2}, \fref{poisson2}. The following technical lemma is proved in Appendix \ref{appB}.
\begin{lemma}
\label{lemmaproducts}
\begin{enumerate}[(i)]
\item Let $u\in H^{s}_A$ and $v\in L^\infty\cap H^{\sigma}_A$ with $0<s<5/2$ and $\sigma \geq \max(s,1)$. In the case $s=1$, we assume additionnally that $\sigma>1$. Then $uv \in H^s_A$ and
\be
\label{prod1}
\|uv\|_{H^s_A}\leq C(s,\sigma)\,\|u\|_{H^s_A}(\|v\|_{H^{\sigma}_A}+\|v\|_{L^\infty}).
\ee
\item Let $u\in H^s_A$ and $v\in H^s_A$ with $\frac{1}{2}\leq s< 1$. Then $uv \in H^{2s-1}_A$ and
\be
\label{prod2}
\|uv\|_{H^{2s-1}_A}\leq C(s)\,\|u\|_{H^s_A}\|v\|_{H^s_A}.
\ee
\end{enumerate}
\end{lemma}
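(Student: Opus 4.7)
The plan is to reduce both estimates to standard product inequalities in the usual Sobolev spaces $H^s(\Omega)$ by exploiting the characterization of $H^s_A$ recalled in Appendix \ref{appB}. Concretely, for $0\leq s<1/2$ one has $H^s_A=H^s(\Omega)$ with equivalent norms, while for $1/2<s<5/2$, $s\neq 3/2$, the space $H^s_A$ coincides with $\{u\in H^s(\Omega)\,:\,u|_{\partial\Omega}=0\}$ with equivalent norms; the further boundary condition $\Delta u|_{\partial\Omega}=0$ only appears at $s\geq 5/2$, which is precisely why the statement restricts to $s<5/2$. The two critical values $s=1/2,3/2$ are handled through the corresponding Lions--Magenes spaces.

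\textbf{Part (i).} I would first check the boundary condition: when $s>1/2$, the trace of $uv$ on $\partial\Omega$ vanishes because $u|_{\partial\Omega}=0$ and $v\in L^\infty$, so $uv$ lies in the right space and only the Sobolev norm $\|uv\|_{H^s(\Omega)}$ needs to be controlled. To do so I would use a standard Kato--Ponce/Moser estimate (obtained by extension to $\RR^2$ and paradifferential calculus, using that $\partial\Omega$ is smooth) of the form $\|uv\|_{H^s(\Omega)}\lesssim \|u\|_{H^s(\Omega)}\|v\|_{L^\infty}+\|u\|_{L^{q_1}(\Omega)}\|v\|_{W^{s,q_2}(\Omega)}$ with $1/q_1+1/q_2=1/2$, picking $q_1,q_2$ so that the 2D Sobolev embeddings $H^s(\Omega)\hookrightarrow L^{q_1}(\Omega)$ and $H^\sigma(\Omega)\hookrightarrow W^{s,q_2}(\Omega)$ both hold under the hypothesis $\sigma\geq \max(s,1)$. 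The second embedding is where the borderline case $s=1$ forces strict inequality $\sigma>1$: some extra integrability is required to compensate for the failure of $H^1(\RR^2)\hookrightarrow L^\infty$.

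\textbf{Part (ii).} For $s=1/2$, one has $2s-1=0$ and the estimate reduces to $\|uv\|_{L^2}\leq \|u\|_{L^4}\|v\|_{L^4}\lesssim \|u\|_{H^{1/2}_A}\|v\|_{H^{1/2}_A}$ via the 2D Sobolev embedding $H^{1/2}\hookrightarrow L^4$. For $1/2<s<1$ one has $0<2s-1<s$, so I would invoke the Sobolev multiplier theorem $H^s(\RR^2)\cdot H^s(\RR^2)\hookrightarrow H^{2s-1}(\RR^2)$ (which is sharp on the 2D Sobolev scale: $s+s-(2s-1)=1=d/2$), applied after a bounded linear extension from $\Omega$ to $\RR^2$. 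The boundary behaviour of $uv$ in the target space $H^{2s-1}_A$ is automatic: no condition is needed when $2s-1<1/2$, and when $2s-1>1/2$ the trace of $uv$ on $\partial\Omega$ vanishes since $u|_{\partial\Omega}=0$.

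\textbf{Main obstacle.} The most delicate points I anticipate are the interpolation across the half-integer thresholds $s=1/2$ and $s=3/2$, where $H^s_A$ must be identified with Lions--Magenes spaces rather than ordinary Sobolev-trace spaces; and the endpoint $s=1$ in part (i), where the failure of the Sobolev embedding $H^1(\RR^2)\hookrightarrow L^\infty$ is exactly what forces the additional assumption $\sigma>1$ in the hypothesis.
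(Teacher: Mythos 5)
Your overall strategy is essentially the same as the paper's: identify $H^s_A$ with boundary-condition subspaces of $H^s(\Omega)$ using the results recalled in Appendix \ref{appB}, pass to $\RR^2$ by extension, apply a Moser/Mikhlin--H\"ormander paraproduct bound with suitable Sobolev exponents, and close. Your treatment of the $s=1$ endpoint in part (i) and of the $s=1/2$ endpoint in part (ii) coincides with what the paper does. There is, however, one point where your plan has a genuine hole that the paper's choice of tools is designed to avoid.

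You propose to extend to $\RR^2$ via a bounded linear (Stein-type, hence ``$\partial\Omega$ smooth'') extension, then estimate $\|uv\|_{H^s(\Omega)}$ and verify that the trace of $uv$ vanishes. For $s\in(1/2,5/2)$, $s\neq 1/2$, the $H^s_A$ norm is equivalent to the $H^s(\Omega)$ norm restricted to the vanishing-trace subspace, so this works. At $s=1/2$, however, the $H^{1/2}_A$ norm is the Lions--Magenes $H^{1/2}_{00}(\Omega)$ norm, which is \emph{strictly stronger} than the $H^{1/2}(\Omega)$ norm, and the trace condition carries no information at that order; a bound on $\|uv\|_{H^{1/2}(\Omega)}$ therefore does not control $\|uv\|_{H^{1/2}_A}$. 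You flag $s=1/2$ as requiring special care ``through the Lions--Magenes spaces'' but give no mechanism, and this threshold is not decorative: $s=1/2$ in part (i) and $2s-1=1/2$ (i.e.\ $s=3/4$) in part (ii) both fall inside the stated ranges, and the $s=1/2$ case of part (i) is precisely what is invoked in Theorem \ref{main-theorem1} at the energy regularity. Incidentally, your worry about $s=3/2$ is unfounded: as recalled in Appendix \ref{appB}, the $H^{3/2}_A$ norm \emph{is} equivalent to the $H^{3/2}(\Omega)$ norm, so only $s=1/2$ is a true norm threshold.

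The paper sidesteps the issue by using the \emph{zero-extension} $u\mapsto\widetilde u$ throughout the range $0<s<3/2$. By Item (a') of Appendix \ref{appB}, the zero-extension is a norm isomorphism from $H^s_0(\Omega)=H^s_A$ into $H^s(\RR^2)$ for all $0\le s<3/2$, \emph{including} the Lions--Magenes case $s=1/2$, and it multiplies: $\widetilde{uv}=\widetilde u\,\widetilde v$. Hence $\|uv\|_{H^s_A}\lesssim\|\widetilde u\,\widetilde v\|_{H^s(\RR^2)}$ holds across the threshold, the paraproduct estimate on $\RR^2$ does the rest, and no separate trace argument is needed below $s=3/2$ (it is only used in the range $3/2\le s<5/2$, where the paper instead invokes the algebra property of $H^s(\Omega)$). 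Your proposal could be repaired by adding, at $s=1/2$ (and $2s-1=1/2$), a direct estimate of the weighted term $\|\rho^{-1/2}(uv)\|_{L^2}$ appearing in the Lions--Magenes norm --- which is trivial in part (i) using $v\in L^\infty$, but requires a Hardy-type inequality in part (ii) --- but the zero-extension route absorbs this bookkeeping uniformly and is what you should use.
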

\bs

\begin{remark}
The limitation $s<\frac{5}{2}$ is sharp in this lemma since, for $s\geq \frac{5}{2}$, the product of two functions in $H^s_A$ does not necessarily belong to this space. Indeed, according to \cite{grisvard}, we have $$H^s_A=\left\{u\in H^s(\Omega)\cap H^1_0(\Omega):\,\Delta u\in H^{s-2}_0(\Omega)\right\}\quad \mbox{for}\quad \frac{5}{2}\leq s<\frac{7}{2}.$$
Now, for $u,v\in H^s_A$, the function $\Delta (uv)$ may not belong to $H^{s-2}_0(\Omega)$. For instance, consider $\Omega=]0,1[$ and $u(x)=\sin(\pi x)$. We have $u\in H^s_A$ but $u^2\not\in H^{5/2}_A$. Indeed, $(u^2)''(x)=2\pi^2\cos(2\pi x)\not\in H^{1/2}_0(0,1)$ since $\rho^{-1/2}(u^2)''\not \in L^2(0,1)$ with $\rho(x)=\min{(x,1-x)}$ (see the characterization of $H^{1/2}_0(0,1)$ in Appendix \ref{appB}).
\end{remark}

\bs
We are now ready to prove our main result concerning the problem in the case of a bounded domain $\Omega$.
\begin{proof}[Proof of Theorem \pref{main-theorem1}]
The operator $\widetilde H_0$ is self-adjoint on its domain $D(\widetilde H_0)=H^{1}_A$. Let $\widetilde K(t)=e^{-it\widetilde H_0}$ be the unitary group generated by this operator, which can also be written as
$$
\widetilde K(t)=\cos(t(-\Delta)^{1/2}){\mathrm I}_2-i\sin(t(-\Delta)^{1/2})\sigma_1\,,
$$
the matrix $\sigma_1$ being defined by \eqref{paulimatrices}.
Since, for all $s\geq 0$, $H^s_A$ is defined as the domain of $A^{s/2}$, $\widetilde K(t)$ is unitary on this space $H^s_A$. The problem \fref{dirac2}, \fref{poisson2}  can be rewritten under the following mild formulation
\beq \label{integral-form2}
\Psi_j(t)=\widetilde K(t) \Phi_j + \int_0^t \widetilde K(t-t')F_j(\Psi(t'))dt'\eeq
with $F_j(\Psi)=V(\Psi)\Psi_{j}$. Let $\frac{1}{2}\leq s<\frac{5}{2}$ and $\Phi=(\Phi_j)_{j\in\NN}\in H^s_A(\lambda)$. In order to solve \fref{integral-form2}, for $T>0$ we introduce the mapping $\widetilde S$ on $C^0([-T,T],H^{s}_A(\lambda))$ defined by 
 \begin{equation}\label{Adef2}
 \widetilde S(\Psi)_j(t)=\widetilde K(t) \Phi_j + \int_0^t\widetilde K(t-t')F_j(\Psi(t'))dt'.
 \end{equation}
 For $M > 2\|\Phi\|_{H^s_A(\lambda)}$, let
 \beq\label{B_Mdef}
 \mathcal B_M= \{ \Psi \in  C^0([-T,T],H^{s}_A(\lambda))\,:\; \max_{t\in[-T,T]}\|\Psi(t)\|_{H^{s}_A(\lambda)}\leq M\}.\eeq
To prove Theorem \ref{main-theorem1}, it is again sufficient to show that $\widetilde S$ is a contraction mapping on $\mathcal B_M$ for $T$ small enough.  Let $\Psi(t)=(\Psi_j(t))_{j\in \NN} \in \mathcal B_M$, then $\widetilde K$ being unitary on $H^s_A$ implies that
$$\max_{t\in[-T,T]}\|\widetilde S(\Psi)(t)\|_{H^s_A(\lambda)} \leq \|\Phi\|_{H^s_A(\lambda)}+\int_0^T\|F(\Psi)(\tau)\|_{H^s_A(\lambda)}d\tau.$$
Now, we claim that, for all $\Psi\in H^s_A(\lambda)$, one has
\be
\label{claim2}\|F(\Psi)\|_{H^s_A(\lambda)}\lesssim \|\Psi\|_{H^s_A(\lambda)}^3.
\ee
Assuming this claim, we deduce that, for $\Psi \in \mathcal B_M$, there exists a constant $C_1 > 0$ such that
 $$ \max_{t\in[-T,T]}\|\widetilde S(\Psi)(t)\|_{H^s_A(\lambda)} \leq \|\Phi\|_{H^s_A(\lambda)}+2C_1T\max_{t\in[-T,T]}\|\Psi(t)\|_{H^s_A(\lambda)}^3.$$
Similarly, for $\Psi, \widetilde\Psi \in \mathcal B_M$, one can prove that
$$  \max_{t\in[-T,T]}\|(\widetilde S(\Psi)-\widetilde S(\widetilde \Psi))(t)\|_{H^s_A(\lambda)} \leq 2C_2M^2T\max_{t\in[-T,T]}\|(\Psi-\widetilde \Psi)(t)\|_{H^s_A(\lambda)}.$$
 Then, if we choose $T$ such that 
 $$\max(C_1,C_2)M^2T\leq \frac14,$$
 one deduces that $\widetilde S$ is a contraction mapping on $\mathcal B_M$ and Theorem \ref{main-theorem1} is proved. 

 It remains to prove \fref{claim2}. Let 
 $$\sigma=\left\{\begin{array}{ll}2s \,&\mbox{if }\, 1/2\leq s< 1,\\
 3/2\,&\mbox{if }\,  s=1,\\s+1\,&\mbox{if }\, s>1.\end{array}\right.$$
In all cases, the pair $(s,\sigma)$ fulfills the conditions of Lemma \ref{lemmaproducts} {\em (i)}. Hence, using Definition \ref{SpacesDef} and Lemma \ref{lemmaproducts}, we get
 \bea
 \|F(\Psi)\|_{H^s_A(\lambda)} &=& \left(\sum_j\lambda_j\|V(\Psi)\Psi_j\|_{H^s_A}^2\right)^{1/2}\nonumber\\
 &\lesssim& \left(\sum_j\lambda_j(\|V(\Psi)\|_{H^\sigma_A}+\|V(\Psi)\|_{L^\infty})^2\|\Psi_j\|_{H^s_A}^2\right)^{1/2}\nonumber\\
 &&=(\|V(\Psi)\|_{H^\sigma_A}+\|V(\Psi)\|_{L^\infty}) \|\Psi\|_{H^s_A(\lambda)}.\label{est1}
 \eea
 Next, from the regularization properties of \fref{poisson2},  we deduce
 $$  \|V(\Psi)\|_{H^\sigma_A}\leq \sum_j\lambda_j \||\Psi_j|^2\|_{H^{\sigma-1}_A}.$$
In the case $\frac{1}{2}\leq s<1$, \fref{prod2} gives
$$\||\Psi_j|^2\|_{H^{\sigma-1}_A}=\||\Psi_j|^2\|_{H^{2s-1}_A}\lesssim \|\Psi_j\|_{H^s_A}^2.$$
In the case $s=1$, \fref{prod2} gives
$$\||\Psi_j|^2\|_{H^{\sigma-1}_A}=\||\Psi_j|^2\|_{H^{1/2}_A}\lesssim \|\Psi_j\|_{H^{3/4}_A}^2\leq \|\Psi_j\|_{H^1_A}^2.$$
Finally, in the case $s>1$, $H^s\hookrightarrow L^\infty$. Then, \fref{prod1} gives
$$\||\Psi_j|^2\|_{H^{\sigma-1}_A}=\||\Psi_j|^2\|_{H^{s}_A}\lesssim \|\Psi_j\|_{H^{s}_A}^{2} .$$
In all cases, we have then
\begin{equation}
\label{new1} \|V(\Psi)\|_{H^\sigma_A}\lesssim \sum_j\lambda_j \|\Psi_j\|_{H^s_A}^2=\|\Psi\|^2_{H^s_A(\lambda)}.
\end{equation}

It remains to estimate the $L^\infty$ norm of $V(\Psi)$. We will in fact prove the following estimate:
\begin{equation}
\label{new2}
\|V(\Psi)\|_{L^\infty}\leq \|\Psi\|^2_{H^{1/2}_A(\lambda)}
\end{equation}
which, together with \fref{est1} and \fref{new1}, yields \fref{claim2}. To prove \eqref{new2}, we will need a lemma on the kernel of $A^{1/2}$.
\begin{lemma}
\label{lemmakernel}
Let $f \geq 0$ and $f\in L^2(\Omega)$. Let $u$ be the solution of $A^{1/2}u=f$, where $A=-\Delta_\Omega$ is the opposite of the Dirichlet Laplacian on $\Omega$. Then $u\in H^1_0(\Omega)$ and, for almost all $x\in \Omega$, we have
\begin{equation}\label{estikernel}
0\leq u(x)\leq \frac{1}{2\pi}\int_{\Omega}\frac{f(y)}{|x-y|}dy.
\end{equation}
\end{lemma}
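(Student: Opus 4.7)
My plan is to combine the subordination formula for $A^{-1/2}$ with a pointwise comparison between the Dirichlet heat kernel on $\Omega$ and the free heat kernel on $\RR^2$. The regularity statement $u\in H^1_0(\Omega)$ is immediate from the spectral definition of the $H^s_A$ scale: writing $u=A^{-1/2}f$, since $f\in L^2(\Omega)$ one has $u\in H^1_A=D(A^{1/2})$, and for the Dirichlet Laplacian on a smooth bounded domain $D(A^{1/2})=H^1_0(\Omega)$, as recalled in Appendix \ref{appB}.

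For the pointwise bound \eqref{estikernel}, I would first use spectral calculus and $\Gamma(1/2)=\sqrt\pi$ to write
\[
A^{-1/2}=\frac{1}{\sqrt\pi}\int_0^{+\infty}t^{-1/2}e^{-tA}\,dt,
\]
and denote by $p_t^\Omega(x,y)$ the kernel of the Dirichlet heat semigroup $e^{-tA}$. Non-negativity of $u$ then follows at once from $f\ge 0$ together with the parabolic maximum principle (which yields $p_t^\Omega\ge 0$). The heart of the argument is the pointwise domination
\[
0\le p_t^\Omega(x,y)\le p_t(x,y):=\frac{1}{4\pi t}\exp\!\left(-\frac{|x-y|^2}{4t}\right),
\]
where $p_t$ is the heat kernel of $-\Delta$ on $\RR^2$. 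This follows by extending $f$ by zero and applying the maximum principle to the difference $p_t\ast f - e^{-tA}f$, which is non-negative on $\partial\Omega$ and vanishes at $t=0$. Substituting this bound into the subordination integral and swapping the order of integration via Fubini (all terms being non-negative) would give
\[
u(x)\le\frac{1}{4\pi^{3/2}}\int_\Omega f(y)\int_0^{+\infty}t^{-3/2}e^{-|x-y|^2/(4t)}\,dt\,dy,
\]
and the change of variable $s=|x-y|^2/(4t)$ reduces the inner integral to $2\sqrt\pi/|x-y|$, producing exactly the right-hand side of \eqref{estikernel}.

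I do not foresee any serious obstacle. Both the subordination identity and the Dirichlet-vs-free heat kernel comparison are classical (spectral calculus for the former, parabolic maximum principle for the latter), and the remaining time integral is an elementary $\Gamma$-function computation. The only mildly delicate point is that the comparison should really be phrased functionally, i.e.\ as the inequality $e^{-tA}g\le p_t\ast\widetilde g$ between the Dirichlet and free evolutions of any non-negative $g\in L^2$ (where $\widetilde g$ denotes the extension by zero), so that the resulting bound on $u$ is valid for almost every $x\in\Omega$, which is exactly what the lemma asserts.
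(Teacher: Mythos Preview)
Your proof is correct and complete: the subordination identity $A^{-1/2}=\pi^{-1/2}\int_0^\infty t^{-1/2}e^{-tA}\,dt$, the domination $p_t^\Omega\le p_t$ of heat kernels, and the Gamma-integral evaluation all work exactly as you describe, and the non-negativity of all integrands makes the Fubini/Tonelli step unproblematic.

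The paper takes a different route. Rather than the heat semigroup, it uses the Cabr\'e--Tan harmonic extension: $A^{1/2}u=f$ is realized as the Dirichlet-to-Neumann map for the harmonic function $v$ on the half-cylinder $\Omega\times(0,+\infty)$ with $v=0$ on the lateral boundary and $\partial_\nu v=f$ at $z=0$, so that $u=v(\cdot,0)$. The comparison function $\overline u(x,z)=(2\pi)^{-1}\int_\Omega f(y)(|x-y|^2+z^2)^{-1/2}\,dy$ is harmonic in the cylinder with the same Neumann datum but non-negative lateral trace, and the \emph{elliptic} maximum principle gives $0\le v\le\overline u$; setting $z=0$ yields \eqref{estikernel}. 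So both arguments hinge on a maximum-principle comparison between the Dirichlet problem on $\Omega$ and the free problem on $\RR^2$, but the paper works with the elliptic extension in one extra dimension while you work with the parabolic evolution in time. Your approach is arguably more self-contained, since the heat-kernel domination is textbook and avoids invoking the extension characterization of $A^{1/2}$; the paper's approach, on the other hand, identifies the comparison kernel $(2\pi|x-y|)^{-1}$ directly as the trace of a concrete harmonic function rather than via an integral computation.
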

\begin{proof}[Proof of Lemma \pref{lemmakernel}]
The fact that $u$ belongs to $H^1_0(\Omega)=H^1_A$ is obvious, from the spectral characterization of $u$, since
$$\|u\|_{H^1_0}^2=\sum_{p\in \NN}\mu_p \,|\langle u,e_p\rangle|^2=\sum_{p\in \NN}|\langle A^{1/2}u,e_p\rangle|^2=\sum_{p\in \NN}|\langle f,e_p\rangle|^2=\|f\|_{L^2}^2<+\infty.$$
From \cite{ct}, we know that the problem $A^{1/2}u=f$ has the following harmonic extension in the half-cylinder $\mathcal C:=\Omega\times (0,+\infty)$. Consider the function $v(x,z)$ satisfying the mixed boundary-value problem
\begin{equation}
\left\{
\begin{array}{rcrl}
-\Delta v&=&0\quad &\mbox{in } \mathcal C,\\[2mm]
v&=&0\quad &\mbox{on } \pa\Omega\times (0,+\infty),\\[2mm]
\ds \frac{\pa v}{\pa \nu}&=&f\quad &\mbox{on } \Omega\times \{0\},
\end{array}
\right.
\end{equation}
where $\nu$ is the unit outer normal to $\mathcal C$ at $\Omega\times \{0\}$, then we have $u=v(\cdot,0)$. The property \eqref{estikernel} is then a direct consequence of the maximum principle. Indeed, consider the function $\overline u(x,z)$ defined by
$$\overline u(x,z)=\frac{1}{2\pi}\int_{\Omega}\frac{f(y)}{\sqrt{|x-y|^2+z^2}}dy=\frac{1}{2\pi}\int_{\RR^2}\frac{\widetilde f(y)}{\sqrt{|x-y|^2+z^2}}dy,$$
where $\widetilde f$ denote the extension of $f$ by zero outside $\Omega$. It can be checked that this function satisfies $-\Delta \overline u=2\widetilde f(x)\,\delta_{z=0}$ in the sens of distributions $\mathcal D(\RR^3)$, which yields
\begin{equation}
\left\{
\begin{array}{rcrl}
-\Delta \overline u&=&0\quad &\mbox{in } \mathcal C,\\[2mm]
\overline u&\geq&0\quad &\mbox{on } \pa\Omega\times (0,+\infty),\\[2mm]
\ds \frac{\pa \overline u}{\pa \nu}&=&f\quad &\mbox{on } \Omega\times \{0\}.
\end{array}
\right.
\end{equation}
Therefore, the maximum principle implies $0\leq v(x,z)\leq \overline u(x,z)$. Setting $z=0$ in this inequality gives \eqref{estikernel}.
\end{proof}
Using this lemma, we proceed as in the whole space case (see \eqref{MH2}) to conclude. Indeed, by the Hardy inequality in $\dot H^{1/2}(\RR^2)$ (see e.g. \cite{tao}), we get
 \begin{align*}
 \|V(\Psi)\|_{L^\infty}&=\frac{1}{2}\left\|\sum_{j\in \NN}\lambda_j A^{-1/2}(|\Psi_j|^2)\right\|_{L^\infty}\leq\frac{1}{2}\sum_{j\in \NN}\lambda_j \left\|A^{-1/2}(|\Psi_j|^2)\right\|_{L^\infty}\\
&\lesssim \sum_{j\in \NN}\lambda_j \supess_{x\in \RR^2}\int_{\RR^2}\frac{|\widetilde \Psi_j(y)|^2}{|x-y|}dy
=\sum_{j\in \NN}\lambda_j \supess_{x\in \RR^2}\int_{\RR^2}\frac{|\widetilde \Psi_j(x-y)|^2}{|x|}dy\\
&\lesssim\sum_{j\in \NN}\lambda_j\supess_{x\in \RR^2}\int_{\RR^2}|(-\Delta_y)^{1/4}\widetilde \Psi_j(x-y)|^2dy\\
&\quad = \sum_{j\in \NN}\lambda_j\|\widetilde \Psi_{j}\|_{\dot H^{1/2}(\RR^2)}^2\\
&\quad \leq \sum_{j\in \NN}\lambda_j\|\widetilde \Psi_{j}\|_{H^{1/2}(\RR^2)}^2\lesssim \sum_{j\in \NN}\lambda_j\|\Psi_{j}\|_{H^{1/2}_A}^2 = \|\Psi\|_{H^{1/2}_A(\lambda)}^2,
 \end{align*}
 where we have denoted by $\widetilde \Psi_j$ the extension of $\Psi_j$ by zero outside $\Omega$. We have proved \eqref{new2} and the proof of Theorem \ref{main-theorem1} is complete.
 \end{proof}

\appendix
\section{The Poisson equation with surface densities}\label{appendix}
In this appendix, we sketch a derivation of the confined Poisson equation, \eqref{poisson} or \fref{poisson2}, associated to a confined electron gas in a plane domain $\Omega$. Such derivation was done rigorously by asymptotic analysis for the nonlinear Schr\"odinger-Poisson system in \cite{BAMP,delebecque}. Here, we only show how \eqref{poisson} or \fref{poisson2} can be formally obtained as the trace of the 3D Poisson equation in the case of a surface density. 

Hence, the starting point of this derivation is the 3D Poisson equation satisfied by the electric potential $V^{3D}$. We consider a charge density under the form $n^{3D}(x,z)=n(x)\delta(z)$ for $x\in \Omega\subseteq \RR^2$, $z\in \RR$, where $\delta(z)$ denotes the Dirac mass. As in the rest of the paper, we distinguish two cases: when $\Omega=\RR^2$ and when $\Omega$ is a bounded regular domain of $\RR^2$.

\subsection{Case $\Omega=\RR^2$}
When the electron gas is allowed to move in all the plane $\RR^2\times \{0\}\subset \RR^3$, the Poisson equation takes the integral form
$$
V^{3D}(x,z)=\frac{1}{4\pi\sqrt{|x|^2+z^2}}\ast (n(x)\delta(z))=\int_{\RR^2}\frac{n(x')}{4\pi\sqrt{|x-x'|^2+z^2}}dx'.
$$
Therefore, the effective potential seen by the particles is
\begin{equation*}
V(x)=V^{3D}(x,0)=\int_{\RR^2}\frac{n(x')}{4\pi\sqrt{|x-x'|^2}}dx'=\frac12(-\Delta)^{-1/2}n.
\end{equation*}

\subsection{Case $\Omega$ bounded} We assume now that the electron gas is only allowed to occupy a device modeled by a bounded domain $\Omega\times \{0\}\subset \RR^3$. Simultaneously, one has to prescribe boundary conditions for $V^{3D}$. We shall consider only the simplest case where the boundary is assumed to be connected to a cylindrical perfect conductor. Then the 3D potential satisfies
\begin{equation}\label{poisson3D}
(-\pa_z^2-\Delta) V^{3D}= n(x)\delta(z) \; \text{ on } \Omega\times \RR,\quad \mbox{ where }\Delta=\pa_{x_1}^2+\pa_{x_2}^2,
\end{equation}
with the boundary conditions
$$
V^{3D}(x,z)=0, \;\text{ on } \partial\Omega\times \RR\quad \mbox{and}\quad V^{3D}\to 0\mbox{ as }z\to \pm \infty
$$
Let $(e_p)_{p\in\NN}$ denote an orthonormal basis of eigenvectors of $-\Delta$ on $\Omega$ with Dirichlet boundary conditions, and let $(\mu_p)_{p\in\NN}$  be the associated eigenvalues. For any $p\geq 0$, the orthogonal projection of \eqref{poisson3D} on 
$e_p$ gives
\begin{equation*}
-\partial_{z}^2V_p^{3D}+\mu_pV_p^{3D}=n_p\delta(z),
\end{equation*}
where $\displaystyle V_p^{3D}(z)=\int_{\Omega}V^{3D}(x,z)e_p(x)dx$ and $\displaystyle n_p=\int_{\Omega}n(x)e_p(x)dx$. One can solve explicitly the last equation: a straightforward  computation gives
\begin{equation*}
V^{3D}(x,z)=\frac12\sum_{p=0}^{+\infty}n_pe_p(x)\frac{e^{-\sqrt{\mu_p}|z|}}{\sqrt{\mu_p}}, \quad z\in \RR, x\in \Omega.
\end{equation*}
Therefore,
\begin{equation*}
V(x)=V^{3D}(x,0)=\frac12\sum_{p=0}^{+\infty}(\mu_p)^{-1/2}n_pe_p(x)=\frac12(-\Delta)^{-1/2}n.
\end{equation*}
\section{Fractional order Sobolev spaces on $\Omega$}
\label{appB}
In this Appendix, we prove the technical Lemma \ref{lemmaproducts} used in the proof of Theorem \ref{main-theorem1}. Before that, let us first recall some useful facts on the domain $H^s_A$ of the fractional Laplacian with Dirichlet boundary conditions and on fractional order Sobolev spaces. For $m\in \NN$, $H^m(\Omega)$ denotes the usual Sobolev space and 
$$H^m_0(\Omega)=\{u\in H^m(\Omega)\,:\; \pa^\alpha u_{\mid\pa\Omega}=0,\quad0\leq |\alpha|<m\}\,$$
where we have used the standard notations for the multiindex $\alpha=(\alpha_1,\alpha_2)$, $|\alpha|=\alpha_1+\alpha_2$ and $\pa^\alpha u=\pa^{\alpha_1}_{x_1}\pa^{\alpha_2}_{x_2}u$. For $s\in \RR_+\setminus \NN$, the fractional Sobolev space $H^s(\Omega)$ is the space defined by interpolation \cite{LIONS}
$$
H^s(\Omega)=[H^m(\Omega),L^2(\Omega)]_\theta, \quad (1-\theta)m=s, \quad m\in \NN, \quad 0<\theta<1,
$$
equipped with the norm
\begin{equation}
\label{fd1}
\|u\|_{H^s(\Omega)}^2=\int\int_{\Omega\times\Omega}\frac{|u(x)-u(y)|^2}{|x-y|^{2+2s}}dxdy\qquad \mbox{for }0<s<1,
\end{equation}
and
\begin{equation}
\label{fd2}
\|u\|_{H^s(\Omega)}^2=\|u\|_{H^{m}}^2+\sum_{|\alpha|=m}\|\pa^\alpha u\|_{H^{s-m}}^2\qquad \mbox{for }s>1,\quad m=\lfloor s\rfloor.
\end{equation}
Moreover, the fractional Sobolev space $H^s_0(\Omega)$ is also defined by interpolation by\footnote{In the case $s\in \frac{1}{2}+\NN$, this space is called the Lions-Magenes space and is often denoted $H^s_{00}(\Omega)$ instead of $H^s_0(\Omega)$ as here.}
$$H^s_0(\Omega)=[H^m_0(\Omega),L^2(\Omega)]_\theta, \quad (1-\theta)m=s, \quad m\in \NN, \quad 0<\theta<1,$$
and $\|\cdot\|_{H^s_0}$ denotes the associated interpolation norm.

\bs
Our main result in the case of bounded domains is formulated using the domain $H^s_A$ of the operator $A^{s/2}$ for $s<\frac{5}{2}$ (see its definition \fref{Hsnorm}). According to  \cite{LIONS,grisvard}, it can also be characterized as follows:
\begin{enumerate}[(a)]
\item For $0\leq s<\frac{3}{2}$, we have $H^s_A=H^s_0(\Omega)$.\\[-3mm]
\item For $\frac{3}{2}\leq s<\frac{5}{2}$,  we have $H^s_A=H^s(\Omega)\cap H^1_0(\Omega)$.
\end{enumerate}
Moreover, for all $0\leq s<\frac{5}{2}$ and $s\neq \frac12$, the $H^s_A$ norm is equivalent to the $H^s(\Omega)$ norm. In the specific case $s=\frac12$, the $H^{1/2}_A$ norm is equivalent to the $H^s_0(\Omega)$ norm, which is {\em not} equivalent to the $H^{1/2}(\Omega)$ norm. 

Indeed, for $0\leq s<\frac{3}{2}$, we recall the following equivalent characterization of the $H^s_0(\Omega)$ space, see for instance \cite{LIONS,trelat,tartar}. It will be useful in particular in the proof of the technical Lemma \ref{lemmaproducts}.
\begin{enumerate}[(a)']
\item For all $0\leq s<\frac{3}{2}$, $H^s_0(\Omega)$ is characterized as the space of functions $u\in H^s(\Omega)$ such that their extension by 0 outside $\Omega$, denoted by $\widetilde u$, belong to $H^s(\RR^2)$, and the norm $\|\cdot\|_{H^s_{0}(\Omega)}$ is equivalent to the norm $u\mapsto \|\widetilde u\|_{H^s(\RR^2)}$.
\item For $0\leq s<\frac{1}{2}$, we have $H^s_0(\Omega)=H^s(\Omega)$, and the associated norms are equivalent.
\item For $\frac{1}{2}<s<\frac{3}{2}$, we have 
$$H^s_0(\Omega)=\{u\in H^s(\Omega)\,:\; u_{\mid\pa\Omega}=0\}\,$$
and the norm $\|\cdot \|_{H^s_0(\Omega)}$ is equivalent to the norm $\|\cdot \|_{H^s(\Omega)}$ defined above.
\item For $s=\frac{1}{2}$, the space $H^{1/2}_0(\Omega)$ is characterized as
$$H^{1/2}_0(\Omega)=\left\{u\in H^{1/2}(\Omega)\,:\; \rho^{-1/2}\,u\in L^2(\Omega)\right\},$$where $\rho(x)$ denotes the distance function to $\pa\Omega$. Moreover, the norm $\|\cdot \|_{H^s_0(\Omega)}$ is equivalent to the norm
$$\left(\|u\|_{H^s(\Omega)}^2+\|\rho^{-1/2}u\|_{L^2(\Omega)}^2\right)^{1/2}.$$
\end{enumerate}

\bs
We are now ready to prove the technical Lemma \pref{lemmaproducts}.
\begin{proof}[Proof of Lemma \pref{lemmaproducts}] Let us first prove {\em (i)} when $0<s< 3/2$. With no loss of generality, we can assume that $\sigma<3/2$. Hence, from the above Item (a), the spaces $H^s_A$ and $H^\sigma_A$ can be respectively identified with $H^s_0$ and $H^\sigma_0$. Denote by $\widetilde u$ and $\widetilde v$ the extensions of $u$ and $v$ by zero outside $\Omega$. By Item (a)', we have $\widetilde u\in H^s(\RR^2)$ and $\widetilde v\in H^s(\RR^2)$. Therefore, we have
$$\|uv\|_{H^s_A} \lesssim \|\widetilde {uv}\|_{H^s(\RR^2)}=\|\widetilde u\,\widetilde v\|_{H^s(\RR^2)}.$$
If $1<s<\frac{3}{2}$ then $H^s(\RR^2)$ is an algebra and the usual estimate for the product of $H^s(\RR^2)$ functions \cite{ALG} yields
\beq
\label{algebra1}
\|uv\|_{H^s_A}\lesssim \|\widetilde u\|_{H^s(\RR^2)}\|\widetilde v\|_{H^s(\RR^2)}\lesssim \|u\|_{H^s_A}\|v\|_{H^s_A},
\eeq
which yields \fref{prod1} since $\sigma\geq s$. If $s\leq 1$, from the Littlewood-Paley theory and the Mikhlin-H\"ormander multiplier theorem \cite{ALG,SCH} one deduces
\be
\label{inter}
\|\widetilde u\,\widetilde v\|_{H^s(\RR^2)}\lesssim \|\widetilde u\|_{L^{p_1}(\RR^2)}\|\widetilde v\|_{W^{s,p_2}(\RR^2)}+\|\widetilde u\|_{H^s(\RR^2)}\|\widetilde v\|_{L^\infty(\RR^2)}
\ee
for all $\frac{1}{p_1}+\frac{1}{p_2}=\frac{1}{2}$ and $2<p_1,\,p_2<+\infty$. Here $W^{s,p}(\RR^2)$ denotes the usual Sobolev spaces on $\RR^2$. If $s<1$, take $p_1=\frac{2}{1-s}$ and $p_2=\frac{2}{s}$ so that by Sobolev embeddings
\bee
\|uv\|_{H^s_A} &\lesssim& \|\widetilde u\|_{H^s(\RR^2)}\|\widetilde v\|_{H^1(\RR^2)}+\|\widetilde u\|_{H^s(\RR^2)}\|\widetilde v\|_{L^\infty(\RR^2)}\\
&\lesssim& \|u\|_{H^s_A}\|v\|_{H^1_A}+\|u\|_{H^s_A}\|v\|_{L^\infty}\leq\|u\|_{H^s_A}(\|v\|_{H^\sigma_A}+\|v\|_{L^\infty}),
\eee
since $\sigma\geq 1$. If $s=1$, then our assumption is that $1<\sigma<3/2$. Let $p_2=\frac{2}{2-\sigma}$ and $p_1=\frac{2p_2}{p_2-2}$, implying that $H^\sigma(\RR^2)\hookrightarrow W^{s,p_2}(\RR^2)$ and $H^s(\RR^2)\hookrightarrow L^{p_1}(\RR^2)$. Hence, \fref{inter} yields
$$
\|uv\|_{H^1_A} \lesssim \|\widetilde u\|_{H^1(\RR^2)}\|\widetilde v\|_{H^\sigma(\RR^2)}+\|\widetilde u\|_{H^1(\RR^2)}\|\widetilde v\|_{L^\infty(\RR^2)}
\lesssim \|u\|_{H^1_A}(\|v\|_{H^\sigma_A}+\|v\|_{L^\infty}).
$$

\bs
Let us prove {\em (i)} when $3/2\leq s< 5/2$. Again, with no loss of generality, we can assume that $\sigma<5/2$. Here, from the above Item (b), we have $H^s_A=H^s(\Omega)\cap H^1_0(\Omega)$ and $H^\sigma_A=H^\sigma(\Omega)\cap H^1_0(\Omega)$. Since $u$ and $v$ vanish on $\pa \Omega$, it is clear that $uv$ also vanishes on $\pa \Omega$. Moreover, it is well-known \cite{RS} that $H^s(\Omega)$ is an algebra for $s> 1$ and that 
\beq
\label{algebra2}
\|uv\|_{H^s(\Omega)}\lesssim \|u\|_{H^s(\Omega)}\|v\|_{H^s(\Omega)}.
\eeq
This is enough to deduce \eqref{prod1}.  

\bs
Let us now prove {\em (ii)}. If $s=\frac{1}{2}$, the estimate stems from the Sobolev embedding $H^{1/2}(\Omega)\hookrightarrow L^4(\Omega)$ and from H\"older:
$$\|uv\|_{L^2}\leq \|u\|_{L^4}\|v\|_{L^4}\lesssim \|u\|_{H^{1/2}}\|v\|_{H^{1/2}}.$$
Consider now the case $\frac{1}{2}<s<1$. One has $0<2s-1<1$, thus, again by Item (a) the spaces $H^{2s-1}_A$ and $H^s_{A}$ can be respectively identified with $H^{2s-1}_0(\Omega)$ and $H^{s}_0(\Omega)$ and by Mikhlin-H\"ormander,
\bee
\|uv\|_{H^{2s-1}_0(\Omega)} &\lesssim& \|\widetilde u\,\widetilde v\|_{H^{2s-1}(\RR^2)}\\
&\lesssim& \|\widetilde u\|_{W^{{2s-1},p_1}(\RR^2)}\|\widetilde v\|_{L^{p_2}(\RR^2)}+\|\widetilde u\|_{L^{p_2}(\RR^2)}\|\widetilde v\|_{W^{{2s-1},p_1}(\RR^2)}\\
&\lesssim& \|\widetilde u\|_{H^s(\RR^2)}\|\widetilde v\|_{H^s(\RR^2)}\lesssim \|u\|_{H^s_0(\Omega)}\|v\|_{H^s_0(\Omega)}.
\eee
Here we have chosen $p_1=\frac{2}{s}$ and $p_2=\frac{2}{1-s}$, so that, by Sobolev embeddings, $H^s(\RR^2)\hookrightarrow W^{{2s-1},p_1}(\RR^2)\cap L^{p_2}(\RR^2)$. The proof of the lemma is complete.
\end{proof}

\section*{Acknowledgment}
We acknowledge support by the ANR-FWF Project Lodiquas (ANR-11-IS01-0003). We would like to thank the anonymous referees for suggesting improvements of our proofs.


\end{document}